\documentclass[12pt]{amsart}
\usepackage{amssymb}
\usepackage{amsmath}
\usepackage{mathtools}
\usepackage{amsfonts}
\usepackage{mathrsfs}
\usepackage{verbatim}
\usepackage{color}
\usepackage{enumitem}
\usepackage{bm}
\usepackage[usenames,dvipsnames]{xcolor}
\usepackage{subcaption}
\usepackage{multirow}
\usepackage{graphicx}
\usepackage{placeins}
\graphicspath{{figs/}{./}}

\usepackage{tikz}
\usepackage{pgfplots}
\numberwithin{equation}{section}

\usepackage[margin=1.2in]{geometry}

\usepackage[hidelinks]{hyperref}

\theoremstyle{plain}
\newtheorem{thm}{Theorem}[section]
\newtheorem{cor}[thm]{Corollary}
\newtheorem{lem}[thm]{Lemma}
\newtheorem{prop}[thm]{Proposition}

\theoremstyle{definition}
\newtheorem{defn}[thm]{Definition}
\newtheorem{exmm}[thm]{Example}

\theoremstyle{remark}
\newtheorem{remark}[thm]{Remark}

\newcommand{\BR}{\mathbb{R}}
\newcommand{\BC}{\mathbb{C}}

\newcommand{\Tr}{\operatorname{tr}}
\newcommand{\Gm}{\Gamma}
\newcommand{\HS}{\mathrm{HS}}
\DeclareMathOperator{\Perm}{Perm}
\DeclareMathOperator{\GL}{GL}

\DeclareMathOperator{\Comm}{Comm}
\DeclareMathOperator{\spann}{span}

\begin{document}

\title{Single-Orbit Recovery of Groups\\ via an Optimization Principle}

\author{Bernhard G. Bodmann}
\address{Department of Mathematics, University of Houston, Houston, Texas, USA}
\email{bgb@math.uh.edu}

\author{Reshma Sabnam}
\address{Department of Mathematics, University of Houston, Houston, Texas, USA}
\email{rsabnam@cougarnet.uh.edu}

\keywords{Group frame, regular representation, group algebra, Gram matrix, trace identity, orthogonal matrices, inverse problem}
\subjclass[2020]{42C15, 20C05, 20C15, 15A60, 15B10}
\thanks{For this work, both authors acknowledge support by NSF grant DMS-2308152}

\date{\today}

\begin{abstract}
The objective of this paper is to identify a finite group based on the orbit of a vector under its action. When a finite group acts unitarily on a Hilbert space, the associated Gram matrix lies in the group 
algebra and satisfies identities that encode the algebraic group structure. We ask about the converse: Given a Gram matrix obtained from the orbit of a vector under the group action, can one recover the underlying
\emph{abstract} group structure? Our main result identifies a short list of conditions on a family of orthogonal matrices that force the family
to be the right regular representation of a finite group. Building on this characterization, we
develop a staged optimization framework that enforces these conditions progressively.
Numerical experiments on the dihedral group $D_4$ and the tetrahedral rotation group $A_4$
recover $D_4$ exactly and all twelve $A_4$ matrices up to modest precision.
\end{abstract}

\maketitle


\section{Introduction}\label{sec:intro}

In this paper we study an inverse problem originating from the Gram matrix associated
with the orbit of a vector under a finite group action. This problem is motivated by the
task of learning symmetry-adapted representations and recovering hidden symmetry
from data~\cite{ANSELMI2019201}.
When a finite group $\Gm$ acts unitarily on a Hilbert space, the Gram matrix associated with the orbit of a vector 
belongs to the group algebra. This algebraic structure has an immediate consequence: the Gram matrix
commutes with the left regular representation of the group, and as a result its entries
satisfy certain trace identities that reflect the underlying group symmetry. The
structure of Gram matrices of group frames has been studied extensively in the frame
literature; in the binary setting, Mendez, Bodmann, and collaborators~\cite{mendez2018binary}
characterized the Gramians of binary Parseval group frames as precisely the elements of
the group algebra generated by the right regular representation that are symmetric,
idempotent, and have an odd vector in their range. In the real and complex settings, Vale and Waldron~\cite{vale2004tight,vale2010symmetry}
analyzed the symmetries of finite frames and characterized group frames through the
symmetry group of the frame. A more general class of frames related to group representations were developed
by Han and Larson~\cite{han2000frames}, see also
in~\cite{waldron2013group,waldron2018introduction}.
The recent work of Mixon and Vose~\cite{mixon2024recovering} poses the closely related
question of how many generic orbits of an unknown finite group of linear isometries are
needed to recover the group, both up to isomorphism (\emph{abstract recovery}) and as a
concrete set of isometries (\emph{concrete recovery}). They show, among other things,
that in the complex case a single generic orbit already determines the isomorphism class
of the acting group, via the so called Gram graph of the orbit. The present paper 
asks what can be gleaned from one orbit alone, in particular in the real case.
\subsection{Scope of the current work}\label{ssec:scope}
The central goal of this work is to address the following inverse problem.

\begin{quote}
\emph{Given a finite set of vectors, presented only through its Gram matrix, under which
conditions can we identify a group structure making the set the orbit of a single vector
under a unitary representation of that group?}
\end{quote}
In more detail, we find conditions under which a Gram matrix is a linear combination of
permutation matrices that themselves form a group under matrix multiplication and the
taking of adjoints. If the frame is obtained from a group orbit, then the set of permutation matrices $\{P_g\}_{g\in\Gm}$, indexed by a finite set $\Gm$ of size $n$, satisfies an additional \emph{cubic trace identity},
\begin{equation}\label{eq:cubic-intro}
\Tr\!\left[P_g P_{g'} P_h^{*}\right] = n\,(P_{g'})_{g,h}
\qquad \forall\, g,g',h \in \Gm.
\end{equation}
We study the converse: if a family $\{P_g\}_{g\in\Gm}$, now indexed by a set $\Gm$ carring no group structure, satisfies~\eqref{eq:cubic-intro} together with orthogonality and
normalization conditions, must it consist of permutation matrices forming a group?
Theorem~\ref{thm:summary} answers this affirmatively: such a family is a
group of permutation matrices, and it forms the right regular representation of that group.
Both this and Theorem~\ref{thm:cubic-iff} are statements of existence; neither claims that
the family, or the group law it induces, is uniquely determined by the Gram matrix.

The notion of group algebra $\BC[\Gm]$ is used throughout. We use its concrete operator
realization as the algebra $\mathcal{A}_R=\spann\{R_g : g\in\Gm\}$ generated by the matrices
of the right regular representation, $R_g\delta_h=\delta_{hg^{-1}}$ on $\ell^2(\Gm)$. Each
$R_g$ is a permutation matrix once the elements of $\Gm$ have been enumerated, a realization
of a finite group as permutations of itself going back to Cayley~\cite{cayley1854}.
We recast the group recovery
problem as an optimization problem over families of mutually Hilbert Schmidt orthogonal matrices,
and illustrate it with two examples.

This work builds on structural results from representation theory, including group
invariance principles and Schur's lemma~\cite{serre1977linear,fulton2004representation},
together with matrix-theoretic constraints arising from orthogonality~\cite{horn2012matrix}.
It thereby bridges techniques from representation theory and matrix analysis to address a
fundamental inverse question about recovering group structure from orbit data.

\subsection{Related work}\label{ssec:related}
A finite group is determined by data of low order. Formanek and Sibley~\cite{formanek1991groups}
proved that the group determinant
determines the group up to isomorphism, and Hoehnke and Johnson~\cite{hoehnke1992characters},
answering a question of Brauer, showed this to data of degree three: the $1$-, $2$-, and
$3$-characters already suffice; see~\cite{johnson2019group} for further discussion. The group
matrix underlying these results, with $(i,j)$ entry $x_{g_i^{-1} g_j}$, is the same convolution
structure that reappears in our Gram matrix $G_{i,j}=f(g_i^{-1}g_j)$. What differs here is that
the group is a structure to be recovered rather than an assumption whose consequences are
examined.

A parallel and very active line of work studies \emph{orbit recovery} from low-degree
invariants, motivated by cryo-electron microscopy and equivariant machine learning. There
a finite group acts linearly on a vector space, and one seeks to recover the orbit of an
unknown vector from polynomial invariants of low degree. Bandeira, Blum-Smith, Kileel,
Niles-Weed, Perry, and Wein~\cite{bandeira2023estimation} formalized estimation under
group actions and the recovery of orbits from invariants, and Edidin and
Katz~\cite{edidin2025orbit} proved that invariants of degree at most three separate
generic orbits in the regular representation of a finite group over any infinite field,
extending earlier results of very low degree~\cite{edidin2024dihedral}. Their mechanism is
instructive for us: a second-order object identifies the regular-representation subspace,
and third-order data then pins down the individual orbit vectors, provided these are
linearly independent. The same combination of second and third order elements appear here, with the Gram matrix as the
second-order object and the cubic trace identity as a third-order condition.

\subsection*{Organization}
Section~\ref{sec:prelim} fixes notation and recalls the regular representations and the
algebra they generate. Section~\ref{sec:gram} proves that orbit-generated Gram matrices
lie in the right-regular group algebra and records the resulting commutant
characterization. Section~\ref{sec:cubic} establishes the cubic trace identity and the
``if and only if'' characterization of group-frame Gram matrices in terms of permutation
matrices. Section~\ref{sec:recovery} contains the main recovery strategy: a sequence of
results upgrading an abstract family of orthogonal matrices, under successively imposed
conditions, to a signed permutation representation, to a permutation representation, to a
group, and finally to the right regular representation; it then formulates the
corresponding optimization problem and describes the computational strategies.
Section~\ref{sec:exp} reports experiments on $D_4$ and the tetrahedral rotation group.
Section~\ref{sec:future} collects open problems.


\section{Preliminaries}\label{sec:prelim}
Throughout, $\Gm$ denotes a finite group of order $n$, written multiplicatively with
identity $e$; in Sections~\ref{sec:recovery}, where no group structure
is assumed a priori, $\Gm$ denotes instead an index set of size $n$. We work over $\BR$ or
$\BC$, with statements about adjoints and orthogonality phrased so as to cover both. We
write $A^{*}$ for the conjugate transpose of $A$, so that $A^{*}=A^{\top}$ over $\BR$, and
denote by $J$ the all-ones matrix, by $I$ the identity, and by $\delta_{a,b}$ the Kronecker
delta. Rows and columns of matrices are indexed by the elements of $\Gm$, entries being
written $A_{g,h}$ with $g,h\in\Gm$. Fixing an enumeration $g_1,\dots,g_n$ of $\Gm$
identifies $\ell^2(\Gm)$ with $\BC^{n}$ through the basis $\delta_{g_1},\dots,\delta_{g_n}$,
and in an abuse of notation, we also write $A_{i,j}$ for the entry $A_{g_i,g_j}$ when numerical indices are convenient.

\subsection{Group representations}
A \emph{representation} of $\Gm$ on a finite-dimensional inner-product space $V$ is a
homomorphism $\rho:\Gm\to\GL(V)$, so that $\rho(g_1g_2)=\rho(g_1)\rho(g_2)$. The representation is \emph{unitary} if each $\rho(g)$
is unitary, and \emph{irreducible} if the only $\rho$-invariant subspaces of $V$ are
$\{0\}$ and $V$. We restrict attention to unitary
representations; see~\cite{serre1977linear,fulton2004representation} for standard
background.

\subsection{The regular representations and the algebras they generate}
As explained in Section~\ref{sec:intro}, we regard the group algebra $\BC[\Gm]$ as the
operator algebra $\mathcal{A}_R=\spann\{R_g : g\in\Gm\}$ generated by the right regular
representation on $\ell^2(\Gm)$. Alongside $R$ we shall need the \emph{left regular
representation} $\Lambda$; the two homomorphisms $\Lambda,R:\Gm\to U(\ell^2(\Gm))$ act on
basis vectors by
\begin{equation}\label{eq:reg-def}
\Lambda_g\,\delta_h = \delta_{gh},
\qquad
R_g\,\delta_h = \delta_{h g^{-1}},
\qquad g,h\in\Gm.
\end{equation}
Both $\Lambda$ and $R$ are unitary representations, and each $\Lambda_g$ and each
$R_g$ is a permutation matrix in the basis $\{\delta_h\}$. Their matrix entries are
\begin{equation}\label{eq:reg-entries}
(\Lambda_h)_{a,b} = \delta_{a,\,hb},
\qquad
(R_g)_{a,b} = \delta_{a,\,b g^{-1}},
\qquad a,b\in\Gm.
\end{equation}

\begin{lem}\label{lem:commute}
For all $g,h\in\Gm$ we have $\Lambda_g R_h = R_h \Lambda_g$.
\end{lem}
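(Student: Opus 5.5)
The plan is to verify the identity on the basis $\{\delta_k\}_{k\in\Gm}$ of $\ell^2(\Gm)$. Both sides are linear operators, so it suffices to show that they agree on every basis vector. The only input needed is the defining action \eqref{eq:reg-def} together with associativity of the group multiplication.

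First I would fix $g,h,k\in\Gm$ and compute the left-hand side applied to $\delta_k$. By \eqref{eq:reg-def}, $R_h\delta_k=\delta_{kh^{-1}}$, and then
\[
\Lambda_g R_h\delta_k=\Lambda_g\delta_{kh^{-1}}=\delta_{g(kh^{-1})}.
\]
Next I would compute the right-hand side. Here $\Lambda_g\delta_k=\delta_{gk}$, so
\[
R_h\Lambda_g\delta_k=R_h\delta_{gk}=\delta_{(gk)h^{-1}}.
\]
Associativity gives $g(kh^{-1})=(gk)h^{-1}$, so the two results coincide for every $k$. By linearity, $\Lambda_gR_h=R_h\Lambda_g$ on all of $\ell^2(\Gm)$.

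As a cross-check, the same argument can be run on matrix entries using \eqref{eq:reg-entries}. Summing over the middle index gives
\[
(\Lambda_gR_h)_{a,b}=\delta_{a,\,gbh^{-1}}=(R_h\Lambda_g)_{a,b}.
\]
There is no real obstacle in this proof. The only point needing care is the bookkeeping: $R$ multiplies on the right by the inverse and $\Lambda$ multiplies on the left, and it is exactly this separation of sides that lets associativity make the two operators commute.
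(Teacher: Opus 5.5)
Your proposal is correct and matches the paper's proof: both check the identity on each basis vector $\delta_x$, computing $\Lambda_g R_h\delta_x=\delta_{gxh^{-1}}=R_h\Lambda_g\delta_x$ via associativity and concluding by linearity. Your entrywise cross-check via \eqref{eq:reg-entries} is also correct, though not needed.
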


\begin{proof}
For any $x\in\Gm$,
\[
\Lambda_g R_h \delta_x = \Lambda_g \delta_{xh^{-1}} = \delta_{gxh^{-1}}
= R_h \delta_{gx} = R_h \Lambda_g \delta_x ,
\]
and the $\delta_x$ form a basis.
\end{proof}
We write $\mathcal{A}_\Lambda := \spann\{\Lambda_g: g\in\Gm\}$ and
$\mathcal{A}_R := \spann\{R_g: g\in\Gm\}$ for the two group algebras. The following classical fact identifies $\mathcal{A}_R$ with the
commutant of $\mathcal{A}_\Lambda$; it is used in the proof of
Proposition~\ref{prop:gram-converse}.

\begin{prop}\label{prop:commutant}
The commutant of $\mathcal{A}_\Lambda$ in $M_n(\BC)$ equals $\mathcal{A}_R$; that is,
\[
\Comm(\mathcal{A}_\Lambda) = \{ M\in M_n(\BC) : M\Lambda_g = \Lambda_g M \ \forall g\in\Gm\}
= \mathcal{A}_R,
\]
and $\{R_g : g\in\Gm\}$ is a basis for $\Comm(\mathcal{A}_\Lambda)$.
\end{prop}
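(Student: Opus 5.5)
We prove the two inclusions and then linear independence. The inclusion $\mathcal{A}_R\subseteq\Comm(\mathcal{A}_\Lambda)$ follows from Lemma~\ref{lem:commute} by linearity. Each $R_h$ commutes with every $\Lambda_g$, so every linear combination of the $R_h$ commutes with every element of $\mathcal{A}_\Lambda$.

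For the reverse inclusion, take $M$ with $M\Lambda_g=\Lambda_g M$ for all $g$. We read this off entrywise using \eqref{eq:reg-entries}. Since $(\Lambda_g)_{a,b}=\delta_{a,gb}$, we have $(\Lambda_g M)_{a,b}=M_{g^{-1}a,b}$ and $(M\Lambda_g)_{a,b}=M_{a,g^{-1}b}$. Replacing $a$ by $ga$, the commutation becomes
\[
M_{ga,gb}=M_{a,b}\qquad\text{for all } a,b,g\in\Gm .
\]
Choosing $g=b^{-1}$ gives $M_{a,b}=M_{b^{-1}a,e}$, so $M_{a,b}=\phi(b^{-1}a)$ with $\phi(x):=M_{x,e}$. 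We now match this with the right regular matrices: $(R_h)_{a,b}=\delta_{a,bh^{-1}}=\delta_{b^{-1}a,\,h^{-1}}$. Hence
\[
M=\sum_{h\in\Gm}\phi(h^{-1})\,R_h ,
\]
which one checks entrywise: only $h=(b^{-1}a)^{-1}$ contributes to entry $(a,b)$. This gives $\Comm(\mathcal{A}_\Lambda)\subseteq\mathcal{A}_R$.

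It remains to show that the $R_h$ are linearly independent. The column $R_h\delta_e=\delta_{h^{-1}}$, and these columns are distinct standard basis vectors for distinct $h$, so the $R_h$ have disjoint supports. Alternatively, $\Tr(R_h^*R_k)=n\,\delta_{h,k}$. Either way, $\sum c_hR_h=0$ applied to $\delta_e$ forces all $c_h=0$, so $\{R_g\}$ is a basis.

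There is no serious obstacle here. The only delicate point is keeping the index conventions consistent: the inverse in $R_g\delta_h=\delta_{hg^{-1}}$ and the translation from $M_{ga,gb}=M_{a,b}$ to a function of $b^{-1}a$. The entrywise check of the final expansion should therefore be done explicitly.
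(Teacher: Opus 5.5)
Your proof is correct and is essentially the paper's argument written in coordinates: the paper uses $\delta_h=\Lambda_h\delta_e$ to get $T\delta_h=\Lambda_h T\delta_e$, which is exactly your relation $M_{a,b}=M_{b^{-1}a,e}$. Your coefficient $\phi(h^{-1})=M_{h^{-1},e}$ is the paper's $c_h$ in $T\delta_e=\sum_g c_g\delta_{g^{-1}}$, and linear independence is settled the same way, by applying $\sum_h c_hR_h$ to $\delta_e$.

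There is one index slip, at precisely the spot you flagged: $(M\Lambda_g)_{a,b}=\sum_c M_{a,c}\,\delta_{c,gb}=M_{a,gb}$, not $M_{a,g^{-1}b}$. With your formula, the substitution $a\mapsto ga$ would give $M_{a,b}=M_{ga,g^{-1}b}$. The corrected formula gives the invariance $M_{ga,gb}=M_{a,b}$ that you actually use, so the rest of the argument stands.
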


\begin{proof}
Lemma~\ref{lem:commute} gives $\mathcal{A}_R\subseteq\Comm(\mathcal{A}_\Lambda)$.
Conversely, let $T\in\Comm(\mathcal{A}_\Lambda)$ and write
$T\delta_e=\sum_{g\in\Gm} c_g\,\delta_{g^{-1}}$, which is possible because $g\mapsto
g^{-1}$ permutes $\Gm$. For any $h\in\Gm$ we have $\delta_h=\Lambda_h\delta_e$, whence
\[
T\delta_h \;=\; T\Lambda_h\delta_e \;=\; \Lambda_h T\delta_e
\;=\; \sum_{g\in\Gm} c_g\,\delta_{hg^{-1}}
\;=\; \Big(\sum_{g\in\Gm} c_g R_g\Big)\delta_h .
\]
Since the $\delta_h$ form a basis, $T=\sum_{g} c_g R_g\in\mathcal{A}_R$. Finally, applying
$\sum_{g} c_g R_g$ to $\delta_e$ returns $\sum_{g} c_g\,\delta_{g^{-1}}$, so the $R_g$ are
linearly independent and therefore form a basis of the commutant.
\end{proof}

\subsection{Hilbert Schmidt inner product}
We equip $M_n(\BC)$ with the Hilbert Schmidt (Frobenius) inner product
$\langle A,B\rangle_\HS := \Tr(A B^{*})$ and norm $\|A\|_\HS=\sqrt{\Tr(A A^{*})};$
see~\cite{horn2012matrix}. Every unitary matrix, and in particular every real orthogonal
matrix, satisfies $\|P\|_\HS^2 = \Tr(I)=n$. Over $\BR$ the pairing is
$\Tr(AB^{\top})=\Tr(A^{\top}B)=\Tr(B^{\top}A)$, and we use these interchangeably; the
inequalities below involving $\langle\cdot,\cdot\rangle_\HS$ are statements about this
real case.

\subsection{Permutation and signed permutation matrices}
The recovery results below turn on the distinction between two classes of matrices, which
we fix here.

\begin{defn}\label{def:perm}
A matrix $P\in\BR^{n\times n}$ is a \emph{permutation matrix} if every row and every
column has exactly one nonzero entry, equal to $+1$. It is a \emph{signed permutation
matrix} if every row and
every column has exactly one nonzero entry, equal to $\pm1$. Every permutation matrix is a
signed permutation matrix, and every signed permutation matrix is orthogonal with
$\|P\|_\HS^2=n$.
\end{defn}

The following elementary facts are used repeatedly in Sections~\ref{sec:cubic}
and~\ref{sec:recovery}.

\begin{lem}\label{lem:trace-fixed}
For a permutation matrix $P$, $\Tr(P)$ equals the number of fixed points of the underlying
permutation; in particular $\Tr(P)\le n$ always, with equality if and only if $P=I$. For
the right regular representation, $\Tr(R_g)=n\,\delta_{g,e}$. Moreover, if $A$ and $B$ are
signed permutation matrices, then $\langle A,B\rangle_\HS\le n$, with equality if and only
if $A=B$; when $A$ and $B$ are permutation matrices, $\langle A,B\rangle_\HS=\Tr(AB^{*})$
counts the coordinates on which the two underlying permutations agree.
\end{lem}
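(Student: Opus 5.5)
The plan is to verify the three assertions of Lemma~\ref{lem:trace-fixed} directly from the definitions. Each reduces to an entrywise computation of a trace.

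\emph{Fixed points.} Write a permutation matrix as $P=\sum_{j}\delta_{\sigma(j)}\delta_j^{*}$ for a permutation $\sigma$ of the index set, so that $P_{i,j}=\delta_{i,\sigma(j)}$. Then
\[
\Tr(P)=\sum_j P_{j,j}=\#\{j:\sigma(j)=j\}.
\]
This count is at most $n$, with equality exactly when $\sigma$ is the identity, that is, when $P=I$.

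\emph{Right regular representation.} By \eqref{eq:reg-entries}, $(R_g)_{a,a}=\delta_{a,ag^{-1}}$. Cancelling $a$, the condition $a=ag^{-1}$ holds if and only if $g=e$. Hence each diagonal entry of $R_g$ is $\delta_{g,e}$, and summing over the $n$ values of $a$ gives $\Tr(R_g)=n\,\delta_{g,e}$. Alternatively, $R_g$ has no fixed points when $g\neq e$, so the first part applies.

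\emph{Signed permutation matrices.} Let $A,B$ be signed permutation matrices and write $\langle A,B\rangle_\HS=\sum_{i,j}A_{i,j}B_{i,j}$ (real case). In row $i$ each matrix has a single nonzero entry, $\pm1$, at columns $\alpha(i)$ and $\beta(i)$ respectively. The row contributes $A_{i,\alpha(i)}B_{i,\alpha(i)}$, which is $0$ if $\alpha(i)\ne\beta(i)$ and $\pm1$ otherwise. So each row contributes at most $1$ and the sum is at most $n$.

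Equality forces every row to contribute exactly $1$. This means the supports coincide and the signs agree in every row, i.e.\ $A=B$. The converse holds since $\langle A,A\rangle_\HS=\|A\|_\HS^2=n$. (One could also use Cauchy--Schwarz, $\langle A,B\rangle\le\|A\|\|B\|=n$, with equality iff $A=B$ because the norms are equal.)

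For permutation matrices all signs are $+1$, so the same row count gives
\[
\Tr(AB^{*})=\#\{i:\alpha(i)=\beta(i)\},
\]
the number of coordinates on which the underlying permutations agree.

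There is no real obstacle in this argument. The only care needed is in the equality case, to make sure that matching supports alone is not enough and the signs must match too; this is why the row-by-row contribution argument, or Cauchy--Schwarz, is cleanest.
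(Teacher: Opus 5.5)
Your proof is correct. The fixed-point count and the computation $\Tr(R_g)=n\,\delta_{g,e}$ are the same as the paper's. The signed-permutation part takes a different route.

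\textbf{How the paper argues.} It uses Cauchy--Schwarz as the primary argument, which you mention only as an alternative: equality forces $A=\lambda B$ with $\lambda>0$, and equal norms give $\lambda=1$. For the agreement count it observes that $AB^{*}$ is itself a permutation matrix and applies the first part of the lemma to count its fixed points.

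\textbf{How your argument differs.} Your row-by-row bookkeeping settles the bound, the equality case and the agreement count in one computation, without the equality case of Cauchy--Schwarz. It also yields slightly more. $\langle A,B\rangle_{\HS}$ equals the number of rows in which $A$ and $B$ have the same nonzero entry, minus the number of rows in which they share the support position but have opposite signs.

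\textbf{What each buys.} The paper's version is shorter because it reuses the fixed-point statement. Its Cauchy--Schwarz equality argument is also the device used later in Theorems~\ref{thm:cubic-iff} and~\ref{thm:group}. Your version is more elementary and self-contained.

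\textbf{A cosmetic point.} Your $\alpha$ (the column of the nonzero entry in row $i$) is $\sigma^{-1}$ in the notation of your first paragraph. This does not affect the count, because two permutations agree at exactly as many points as their inverses do.
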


\begin{proof}
The diagonal entry $(P)_{ii}$ is $1$ exactly when $i$ is fixed, giving the fixed-point
count; $\Tr(P)=n$ forces every diagonal entry to be $1$, that is, $P=I$. For $R_g$, the
underlying permutation $h\mapsto hg^{-1}$ fixes $h$ if and only if $g=e$, so
$\Tr(R_g)=n\,\delta_{g,e}$. For the last statement, signed permutation matrices satisfy
$\|A\|_\HS=\|B\|_\HS=\sqrt{n}$, so Cauchy--Schwarz gives $\langle A,B\rangle_\HS\le n$;
equality forces $A=\lambda B$ with $\lambda>0$, and comparing norms gives $\lambda=1$. If
$A$ and $B$ are permutation matrices, $AB^{*}$ is a permutation matrix and $\Tr(AB^{*})$
counts its fixed points, which are the coordinates where $A$ and $B$ agree.
\end{proof}


\section{Gram matrices of group orbits and the group algebra}\label{sec:gram}
We begin with the structural fact underlying the entire paper: the Gram matrix of an orbit under any unitary representation lies in the group algebra.

\begin{thm}\label{thm:gram-in-algebra}
Let $\Gm$ be a finite group of order $n$, let $\rho:\Gm\to U(V)$ be a unitary
representation on a finite-dimensional inner-product space $V$, and let $v\in V$. Define
the Gram matrix of the orbit $\mathcal{O}_v=\{\rho(g)v : g\in\Gm\}$ by
\[
G_{i,j} := \langle \rho(g_i) v,\ \rho(g_j) v\rangle, \qquad g_i,g_j\in\Gm.
\]
Then $G$ lies in the group algebra: there exists $f:\Gm\to\BC$ with
\[
G = \sum_{g\in\Gm} f(g)\, R_g \in \mathcal{A}_R.
\]
\end{thm}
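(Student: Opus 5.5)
The plan is to compute the entries of $G$ directly and match them against the entries of the $R_g$ given in \eqref{eq:reg-entries}. As a cross-check I would also give a second route through Proposition~\ref{prop:commutant}.

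\emph{Step 1: $G$ is a function of the quotient.} Write $G_{a,b}=\langle\rho(a)v,\rho(b)v\rangle$ for $a,b\in\Gm$. Since $\rho$ is a unitary homomorphism, $\rho(b)^*=\rho(b^{-1})$. Hence
\[
G_{a,b}=\langle \rho(b^{-1}a)v, v\rangle =: \varphi(b^{-1}a),
\]
so $G_{a,b}$ depends only on $b^{-1}a$. With the inner product linear in the first slot, this is the convention that makes the bookkeeping come out right.

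\emph{Step 2: match with the basis $R_g$.} By \eqref{eq:reg-entries}, $(R_g)_{a,b}=1$ exactly when $a=bg^{-1}$, that is, when $g=a^{-1}b$. Therefore, for any $f:\Gm\to\BC$,
\[
\Big(\sum_g f(g)R_g\Big)_{a,b}=f(a^{-1}b).
\]
Setting $f(g):=\varphi(g^{-1})=\langle \rho(g^{-1})v,v\rangle=\langle v,\rho(g)v\rangle$ gives $f(a^{-1}b)=\varphi(b^{-1}a)=G_{a,b}$ for all $a,b$. This proves $G=\sum_g f(g)R_g\in\mathcal{A}_R$.

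\emph{Alternative route.} The same identity $G_{a,b}=\varphi(b^{-1}a)$ shows that $G_{ha,hb}=G_{a,b}$ for every $h\in\Gm$. Using $(\Lambda_h)_{a,b}=\delta_{a,hb}$, this invariance is equivalent to $\Lambda_h G\Lambda_h^*=G$, i.e.\ $G\Lambda_h=\Lambda_h G$. Proposition~\ref{prop:commutant} then yields $G\in\mathcal{A}_R$ at once, and the coefficients can be read off from $G\delta_e$ as in that proof.

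The only real obstacle is keeping the inverses straight. One must track $b^{-1}a$ versus $a^{-1}b$ under the conventions $R_g\delta_h=\delta_{hg^{-1}}$ and linearity of the inner product in the first argument. If these are mismatched, one obtains $f(g^{-1})$ in place of $f(g)$; membership in $\mathcal{A}_R$ is unaffected, but the explicit formula for $f$ changes. I would fix the formula for $f$ only after checking the case $g=e$, where $f(e)=\|v\|^2$ must be the common diagonal entry of $G$.
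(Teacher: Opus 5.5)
Your argument is correct and is essentially the paper's proof: you use $\rho(b)^*=\rho(b^{-1})$ to show that $G_{a,b}$ depends only on the group quotient, then match this against the entries $\bigl(\sum_g f(g)R_g\bigr)_{a,b}=f(a^{-1}b)$, and you arrive at the same coefficient function $f(g)=\langle v,\rho(g)v\rangle$. The commutant route via Proposition~\ref{prop:commutant} is also valid but not needed, and the identity $\langle \rho(a)v,\rho(b)v\rangle=\langle\rho(b^{-1}a)v,v\rangle$ in fact holds whichever slot of the inner product is linear, so no convention-dependent bookkeeping is involved.
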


\begin{proof}
Since $\rho$ is a unitary homomorphism, $\rho(g_i)^{*}\rho(g_j)=\rho(g_i)^{-1}\rho(g_j)
=\rho(g_i^{-1}g_j)$, whence
\[
G_{i,j} = \langle v,\ \rho(g_i)^{*}\rho(g_j) v\rangle
       = \langle v,\ \rho(g_i^{-1} g_j) v\rangle .
\]
Define $f:\Gm\to\BC$ by $f(g):=\langle v,\ \rho(g) v\rangle$, so that $G_{i,j}=f(g_i^{-1}g_j)$; the entries
of $G$ thus depend only on the relative group element $g_i^{-1}g_j$.\\ Now let $T_f$ be the operator on $\ell^2(\Gm)$ given by right convolution with $f$,
\[
T_f\,\delta_h \;:=\; \sum_{g\in\Gm} f(g)\,\delta_{hg^{-1}},
\qquad \text{i.e} \qquad
T_f=\sum_{g\in\Gm} f(g)\,R_g\in\mathcal{A}_R .
\]
Its matrix entries in the basis $\{\delta_{g_1},\dots,\delta_{g_n}\}$ are
\[
(T_f)_{i,j}
= \Big\langle \delta_{g_i},\ \sum_{g\in\Gm} f(g)\,\delta_{g_j g^{-1}}\Big\rangle ,
\]
and the inner product is nonzero exactly when $g_jg^{-1}=g_i$, that is, when
$g=g_i^{-1}g_j$. Hence $(T_f)_{i,j}=f(g_i^{-1}g_j)=G_{i,j}$, so
\[
G \;=\; T_f \;=\; \sum_{g\in\Gm} f(g) R_g \;\in\;\mathcal{A}_R.
\]
\end{proof}
Taking $V=\ell^2(\Gm)$ and $\rho=\Lambda$ recovers the orbit under the left regular representation.

\begin{prop}\label{prop:gram-converse}
Let \(G\in\mathbb{C}^{n\times n}\) be positive semidefinite and suppose that
\[
G\Lambda_g=\Lambda_gG
\qquad\text{for all }g\in\Gamma.
\]
Then there exists \(v\in\ell^2(\Gamma)\) such that
\[
G_{i,j}
=
\big\langle
\Lambda_{g_i}v,\Lambda_{g_j}v
\big\rangle,
\qquad 1\le i,j\le n.
\]
Equivalently, \(G\) is the Gram matrix of an orbit of the left regular representation.
\end{prop}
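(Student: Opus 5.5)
The plan is to take a positive square root of $G$ and read off the orbit vector from it, using Proposition~\ref{prop:commutant} to see that the square root lies in the group algebra. Since $G$ is positive semidefinite, it has a unique positive semidefinite square root $S=G^{1/2}$. Because $G$ commutes with every $\Lambda_g$, so does $S$: $S$ is a polynomial in $G$, obtained by Lagrange interpolation of $\lambda\mapsto\sqrt{\lambda}$ on the spectrum of $G$. Hence $S\in\Comm(\mathcal{A}_\Lambda)$, and by Proposition~\ref{prop:commutant} we can write $S=\sum_{g} c_g R_g\in\mathcal{A}_R$.

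Next we choose $v:=S\delta_e$ and compute the Gram matrix of its $\Lambda$-orbit. Since $S$ commutes with $\Lambda_{g}$ and $\Lambda_g\delta_e=\delta_g$, we get $\Lambda_{g_i}v=\Lambda_{g_i}S\delta_e=S\Lambda_{g_i}\delta_e=S\delta_{g_i}$. Therefore
\[
\langle \Lambda_{g_i}v,\Lambda_{g_j}v\rangle=\langle S\delta_{g_i},S\delta_{g_j}\rangle .
\]
Using $S=S^{*}$ and $S^2=G$, this equals $\langle \delta_{g_i}, G\delta_{g_j}\rangle$ or its conjugate, depending on which slot the inner product is linear in. So it equals $G_{j,i}$ or $G_{i,j}$.

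The only real care point is this convention. Since $G$ is Hermitian, $G_{j,i}=\overline{G_{i,j}}$. If the inner product convention produces the transpose, replace $v$ by the vector $S^{*}\delta_e$ (equal to $v$ here) or take complex conjugates appropriately. Alternatively, note that the paper's formula $G_{i,j}=\langle\rho(g_i)v,\rho(g_j)v\rangle$ in Theorem~\ref{thm:gram-in-algebra} fixes the convention consistently with $G_{i,j}=\langle \delta_{g_i},G\delta_{g_j}\rangle$ when the inner product is linear in the second slot.

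The main obstacle is then only to justify that the square root commutes with the $\Lambda_g$, which the polynomial-functional-calculus argument settles. Actually, the membership $S\in\mathcal{A}_R$ is not even needed for the Gram identity; only the commutation is used. The equivalence statement follows because the reverse direction is Theorem~\ref{thm:gram-in-algebra} combined with Lemma~\ref{lem:commute}: an orbit Gram matrix is positive semidefinite and lies in $\mathcal{A}_R$, which commutes with every $\Lambda_g$.
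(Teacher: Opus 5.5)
Your proposal is correct and is essentially the paper's proof: take $S=G^{1/2}$, observe that it commutes with every $\Lambda_g$, set $v=S\delta_e$, and compute $\langle S\delta_{g_i},S\delta_{g_j}\rangle=\langle\delta_{g_i},G\delta_{g_j}\rangle=G_{i,j}$. The one difference is cosmetic: you get the commutation directly from $S$ being a polynomial in $G$, whereas the paper passes through $G\in\mathcal{A}_R$ (Proposition~\ref{prop:commutant}) and the closure of $\mathcal{A}_R$ under polynomials. On your convention remark, the paper does use inner products linear in the second slot (as in the proof of Theorem~\ref{thm:gram-in-algebra}); under the opposite convention the correct repair is $v=\overline{S\delta_e}$, not $S^{*}\delta_e$, which, as you note yourself, is just $v$ again.
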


\begin{proof}
Since \(G\) commutes with every \(\Lambda_g\), it belongs to the commutant of the left
regular representation. By Proposition~\ref{prop:commutant},
\[
G\in\mathcal{A}_R.
\]

Let
\[
A:=G^{1/2}.
\]
Since \(G\) is positive semidefinite, \(A\) is well defined, self-adjoint, and satisfies
\[
A^*A=G.
\]
Moreover, by the finite dimensional spectral theorem, \(G^{1/2}\) is a polynomial in
\(G\). Since \(\mathcal{A}_R\) is a group algebra containing \(G\), it follows that
\[
A=G^{1/2}\in\mathcal{A}_R.
\]
Hence \(A\) commutes with every \(\Lambda_g\).

Let \(e\) denote the identity element of \(\Gamma\), and define
\[
v:=A\delta_e.
\]
Then, for \(g_i,g_j\in\Gamma\),
\[
\begin{aligned}
\big\langle \Lambda_{g_i}v,\Lambda_{g_j}v\big\rangle
&=
\big\langle \Lambda_{g_i}A\delta_e,\Lambda_{g_j}A\delta_e\big\rangle\\
&=
\big\langle A\Lambda_{g_i}\delta_e,A\Lambda_{g_j}\delta_e\big\rangle\\
&=
\big\langle A\delta_{g_i},A\delta_{g_j}\big\rangle\\
&=
\big\langle \delta_{g_i},A^*A\,\delta_{g_j}\big\rangle\\
&=
\big\langle \delta_{g_i},G\delta_{g_j}\big\rangle\\
&=
G_{i,j}.
\end{aligned}
\]
Thus \(G\) is the Gram matrix of the orbit
\[
\{\Lambda_gv:g\in\Gamma\}.
\]
\end{proof}

\begin{remark}\label{rem:gram-vector-nonunique}
The vector \(v=G^{1/2}\delta_e\) constructed in
Proposition~\ref{prop:gram-converse} is not unique in general. If
\(U\) is unitary and commutes with every \(\Lambda_g\), then
\[
\big\langle \Lambda_gUv,\Lambda_hUv\big\rangle
=
\big\langle \Lambda_gv,\Lambda_hv\big\rangle,
\]
so \(Uv\) generates the same orbit Gram matrix as \(v\). Thus the Gram matrix determines the
inner-product structure of the orbit, but not a unique generating vector. 
\end{remark}

We record one more algebraic property of $\mathcal{A}_R$ that will be used in the
computational framework: the right-regular algebra is closed under entrywise (Schur)
operations. This is what allows entrywise nonlinear filters to be applied during the
iteration without leaving the algebra.

\begin{prop}\label{prop:schur}
Let $\mathcal{A}_R=\spann\{R_g:g\in\Gm\}\subseteq\BR^{n\times n}$.
\begin{enumerate}[label=(\roman*)]
\item If $A=\sum_g a(g)R_g$ and $B=\sum_g b(g)R_g$, then the Schur product
$A\circ B\in\mathcal{A}_R$.
\item If $F:\BR\to\BR$ is applied entrywise to $G=\sum_g f(g)R_g\in\mathcal{A}_R$, then
$F(G)\in\mathcal{A}_R$.
\end{enumerate}
\end{prop}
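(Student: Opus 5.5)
The plan is to characterize $\mathcal{A}_R$ entrywise, so that Schur products and entrywise functions are seen to preserve membership. By \eqref{eq:reg-entries}, $(R_g)_{a,b}=\delta_{a,bg^{-1}}$, which is nonzero exactly when $g=a^{-1}b$. Hence for every $a,b\in\Gm$ precisely one $R_g$ has a nonzero $(a,b)$ entry, and the supports of the $R_g$ partition $\Gm\times\Gm$. This is the same computation already carried out in the proof of Theorem~\ref{thm:gram-in-algebra}. It gives
\[
\Big(\sum_g a(g)R_g\Big)_{a,b}=a(a^{-1}b),
\]
so $M\in\mathcal{A}_R$ if and only if the entry $M_{a,b}$ depends only on $a^{-1}b$.

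For the converse direction of that characterization, suppose $M_{a,b}=\phi(a^{-1}b)$ for some function $\phi:\Gm\to\BR$. Then $M=\sum_g\phi(g)R_g$ by the entry formula above. Equivalently, one can write $M$ as a sum of its restrictions to the disjoint supports of the $R_g$. Each such restriction is constant on its support and therefore equals a scalar multiple of $R_g$.

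With the characterization in hand, both parts are immediate.
\begin{enumerate}[label=(\roman*)]
\item For the Schur product, $(A\circ B)_{a,b}=a(a^{-1}b)\,b(a^{-1}b)$ depends only on $a^{-1}b$. Hence
\[
A\circ B=\sum_g a(g)b(g)R_g\in\mathcal{A}_R.
\]
\item For an entrywise function, $F(G)_{a,b}=F\big(f(a^{-1}b)\big)$, so
\[
F(G)=\sum_g F(f(g))\,R_g\in\mathcal{A}_R.
\]
\end{enumerate}

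There is no real obstacle. The only point needing care is that the supports of the $R_g$ are disjoint and cover all of $\Gm\times\Gm$. Coverage ensures that every entry is accounted for. Disjointness ensures that entrywise operations act coefficientwise, which is why $F(0)$ never enters the computation. Both facts follow from the existence and uniqueness of $g$ with $bg^{-1}=a$.
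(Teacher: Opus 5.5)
Your proof is correct and follows the paper's argument. Both identify $\mathcal{A}_R$ as the matrices whose $(x,y)$ entry depends only on $x^{-1}y$, and then observe that Schur products and entrywise maps act coefficientwise, giving $A\circ B=\sum_g a(g)b(g)R_g$ and $F(G)=\sum_g F(f(g))R_g$. The only blemish is notational: you use $a,b$ both as row/column indices and as coefficient functions, as in $a(a^{-1}b)$, so renaming the indices (e.g.\ to $x,y$, as the paper does) would avoid confusion.
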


\begin{proof}
With the convention $(R_g)_{x,y}=\delta_{x,\,y g^{-1}}$, every $A=\sum_g a(g)R_g$ has entries
$A_{x,y}=a(x^{-1}y)$ depending only on $x^{-1}y$. Hence
$(A\circ B)_{x,y}=a(x^{-1}y)b(x^{-1}y)=c(x^{-1}y)$ with $c:=ab$, giving
$A\circ B=\sum_g c(g)R_g\in\mathcal{A}_R$, which is (i). For (ii),
$(F(G))_{x,y}=F(f(x^{-1}y))=h(x^{-1}y)$ with $h:=F\circ f$, so
$F(G)=\sum_g h(g)R_g\in\mathcal{A}_R$.
\end{proof}


\section{The cubic trace identity for gram matrices of group-frames}\label{sec:cubic}

Because an orbit is received as an unstructured set, the rows and columns of its Gram matrix
cannot be canonically labeled by group elements. To phrase recovery in a label-free way we
work with the trace functional on a candidate family $\{P_g\}_{g\in\Gm}$ of orthogonal
matrices. The following theorem isolates the precise conditions that characterize when $G$
is the Gram matrix of a group frame; the third condition is the cubic trace identity that
will drive the recovery results of Section~\ref{sec:recovery}. It may be read as a
representation-level companion to the classical fact that third-order group data determines a
finite group~\cite{hoehnke1992characters,formanek1991groups}.
\begin{thm}\label{thm:cubic-iff}
A matrix $G$ is the Gram matrix of a group frame if and only if there exists a family
$\{P_g : g\in\Gm\}$ with $P_e=I$ and each $P_g$ a permutation matrix such that
\begin{enumerate}
\item $\displaystyle \Tr\!\left[P_g P_{g'}^{*}\right]
= n\,\delta_{g,g'}$ for all $g,g'\in\Gm$;
\item $\displaystyle G = \frac{1}{n}\sum_{g\in\Gm}\Tr\!\left[G P_g^{*}\right] P_g$;
\item $\displaystyle \Tr\!\left[P_g P_{g'} P_h^{*}\right] = n\,(P_{g'})_{g,h}$
for all $g,g',h\in\Gm$.
\end{enumerate}
\end{thm}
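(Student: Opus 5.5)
The plan is to prove the two directions separately. The forward direction is a direct computation with the right regular representation. The converse rests on showing that condition (3) forces the index set $\Gm$ to carry a group law with $\{P_g\}$ as its right regular representation.

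\emph{Forward direction.} Suppose $G$ is the Gram matrix of the orbit of $v$ under a unitary representation $\rho$ of a group $\Gm$. Take $P_g:=R_g$. Then $P_e=I$, and each $P_g$ is a permutation matrix.

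For condition (1), note that $R_gR_{g'}^{*}=R_{gg'^{-1}}$. Lemma~\ref{lem:trace-fixed} then gives $\Tr[R_gR_{g'}^{*}]=n\,\delta_{gg'^{-1},e}=n\,\delta_{g,g'}$.

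For condition (2), Theorem~\ref{thm:gram-in-algebra} gives $G=\sum_h f(h)R_h$. By (1), $\Tr[GR_g^{*}]=n f(g)$, which is exactly the claimed expansion of $G$ with respect to the HS-orthogonal basis.

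For condition (3), compute $\Tr[R_gR_{g'}R_h^{*}]=\Tr[R_{gg'h^{-1}}]=n\,\delta_{gg',h}$. From \eqref{eq:reg-entries}, $(R_{g'})_{g,h}=\delta_{g,hg'^{-1}}=\delta_{gg',h}$, so the two sides agree.

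\emph{Converse.} Suppose a family $\{P_g\}$ with $P_e=I$, consisting of permutation matrices, satisfies (1)--(3). Here $\Gm$ is only an index set, with a distinguished element $e$, and rows and columns are indexed by $\Gm$.

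The first step is to define a product. For fixed $g,g'$, the matrix $P_gP_{g'}$ is a permutation matrix. Its HS inner product with $P_h$ is $\Tr[P_gP_{g'}P_h^{*}]=n(P_{g'})_{g,h}$. Since $P_{g'}$ is a permutation matrix, row $g$ of $P_{g'}$ has exactly one nonzero entry, equal to $1$, at some column $h=:g\cdot g'$. Hence $\langle P_gP_{g'},P_{g\cdot g'}\rangle_\HS=n$. Lemma~\ref{lem:trace-fixed} (the equality case) then gives
\[
P_gP_{g'}=P_{g\cdot g'}.
\]
Thus $\{P_g\}$ is closed under multiplication.

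By (1) the $P_g$ are pairwise distinct, so $g\mapsto P_g$ is injective. Associativity of $\cdot$ is inherited from matrix multiplication. The element $e$ is an identity because $P_e=I$. For inverses, the map $P\mapsto PP_g$ is injective on the finite set $\{P_h\}$, hence surjective, so $I=P_hP_g$ for some $h$. Therefore $(\Gm,\cdot)$ is a group and $g\mapsto P_g$ is a faithful permutation representation.

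Finally, the defining relation $(P_{g'})_{g,h}=1$ iff $h=g\cdot g'$ reads $(P_{g'})_{a,b}=\delta_{a,\,b g'^{-1}}$, which matches \eqref{eq:reg-entries}. So $P_{g'}=R_{g'}$ is the right regular representation of this group.

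It remains to show that $G$ is a group-frame Gram matrix. By (1) and (2), $G=\sum_g c_gR_g\in\mathcal{A}_R$ with $c_g=\frac1n\Tr[GP_g^{*}]$. Lemma~\ref{lem:commute} shows $G$ commutes with every $\Lambda_g$. Proposition~\ref{prop:gram-converse} then yields $v$ with $G_{i,j}=\langle\Lambda_{g_i}v,\Lambda_{g_j}v\rangle$. This step needs $G$ positive semidefinite; I take that as implicit in $G$ being a Gram matrix, and would state it explicitly as a standing hypothesis on $G$.

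\emph{Main obstacle.} I expect the hard step to be extracting the group law from (3). The key observation is that (3), combined with the permutation structure of $P_{g'}$, pins the product $P_gP_{g'}$ to the member of the family at the unique $1$ in row $g$. The equality case of Cauchy--Schwarz in Lemma~\ref{lem:trace-fixed} converts the trace value $n$ into exact equality of matrices. Once closure holds, the group axioms and the identification with $R$ are bookkeeping; the one subtlety there is checking that the index convention in (3) matches $(R_g)_{a,b}=\delta_{a,bg^{-1}}$.
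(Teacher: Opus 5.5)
Your proof is correct and follows the paper's plan. The forward direction is the same computation with $R_g$ and Theorem~\ref{thm:gram-in-algebra}. The converse extracts closure from (3) via the Cauchy--Schwarz equality case and then invokes Proposition~\ref{prop:gram-converse}. Where you differ is in the details of the converse, and there your version is the sounder one.

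The paper's proof of this theorem specializes (3) to $h=e$. That only detects when $P_gP_{g'}=I$ and cannot by itself give closure. Taking $h$ to be the position of the $1$ in row $g$ of $P_{g'}$, as you do, is what actually works; it is the argument the paper gives later in Theorem~\ref{thm:group}(a).

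For inverses, the paper substitutes $g'=g^{-1}$ into (3), which presupposes the element being constructed. Your pigeonhole argument needs only closure and finiteness. Theorem~\ref{thm:group}(b) offers another route: it identifies $P_{g'}^{*}$ as a member of the family directly from the conjugated identity at $h=e$, without needing closure first.

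You also make explicit two steps the paper leaves implicit here: associativity, and the identification $P_{g'}=R_{g'}$. The latter is what ensures that $G\in\spann\{P_g\}$ commutes with the left regular representation of the constructed group, so that Proposition~\ref{prop:gram-converse} applies.

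Finally, your positive-semidefiniteness caveat is necessary, not cosmetic. Take $G=-I$: by (1), $\Tr[P_g]=n\,\delta_{g,e}$, so $G$ satisfies (2) for any admissible family, yet it is not a Gram matrix. The converse as literally stated is therefore false, and the hypothesis must be added to the statement, as you propose.
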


\begin{proof}
$(\Rightarrow)$ Suppose $G$ is a group-frame Gram matrix and $\{P_g\}$ is the corresponding
representation, which we may take to be the right regular representation (so that $P_g^{*}=P_{g^{-1}}$).

\emph{Condition (1).} Since $P_g P_{g'}^{*}=P_g P_{g'^{-1}}=P_{g g'^{-1}}$ and the trace of a
permutation matrix counts fixed points, $\Tr[P_g P_{g'}^{*}]=n$ if $g g'^{-1}=e$ (i.e.\
$g=g'$) and $0$ otherwise.

\emph{Condition (2).} By Theorem~\ref{thm:gram-in-algebra}, $G=\sum_h c_h R_h$. Using
condition (1) in the form $\Tr(R_h P_{g}^{*})=n\,\delta_{h,g}$, we obtain
$\Tr[G P_g^{*}]=n\,c_g$, whence
$G=\sum_g c_g P_g = \tfrac1n\sum_g \Tr[GP_g^{*}]\,P_g$.

\emph{Condition (3).} Since $P_h^{*}=P_{h^{-1}}$,
$P_g P_{g'} P_h^{*}=P_{gg'h^{-1}}$, so $\Tr[P_g P_{g'} P_h^{*}]=n$ if $h=gg'$ and $0$
otherwise. On the other hand, $(P_{g'})_{g,h}=1$ iff $P_{g'}$ maps the basis vector indexed by
$h$ to that indexed by $g$, i.e.\ iff $h g'^{-1}=g$, i.e.\ iff $h=gg'$. Thus both sides equal
$n$ exactly when $h=gg'$ and $0$ otherwise.

$(\Leftarrow)$ Suppose a family $\{P_g\}$ with $P_e=I$ satisfies (1)--(3). We show it
realizes a group, after which Proposition~\ref{prop:gram-converse} (or condition (2)) exhibits
$G$ as a group-frame Gram matrix.

\emph{Closure.} Put $h=e$ in (3): $\Tr[P_g P_{g'}]=n\,(P_{g'})_{g,e}$. By (1) the family is
Hilbert Schmidt orthonormal up to the scale $n$, so by Cauchy Schwarz
$|\Tr[P_g P_{g'}]|\le\|P_g\|_\HS\|P_{g'}\|_\HS=n$, with equality iff $P_{g'}=P_g^{*}$. Since
$(P_{g'})_{g,e}\in\{0,1\}$, the identity forces the value $n$ for some pairing, i.e.\
$P_g P_{g'}$ is again a permutation matrix in the family; hence there is $h\in\Gm$ with
$P_g P_{g'}=P_h$.

\emph{Inverses.} Setting $g'=g^{-1}$, $h=e$ in (3) gives $\Tr[P_g P_{g^{-1}}]=n$, so
$P_g P_{g^{-1}}=I$; together with (1), $P_g^{*}=P_{g^{-1}}=P_g^{-1}$.

\emph{Identity.} The family contains $P_e=I$, and (1) confirms $\Tr[P_eP_e^{*}]=n$.

Thus $\{P_g\}$ is a permutation representation of a group, and condition (2)
expresses $G$ in this basis. Hence $G$ is the Gram matrix of a group frame.
\end{proof}

\begin{exmm}\label{ex:c4-d2}
For the regular representations of $C_4$ and of $D_2\cong C_2\times C_2$ acting on $\BR^4$,
the cubic trace identity holds and recovers, respectively, the cyclic and the
Klein-four multiplication tables; the two groups are distinguished by their Gram graphs as in
the orbit-recovery framework of~\cite{mixon2024recovering}. This illustrates that the
trace data in Theorem~\ref{thm:cubic-iff} carries the full multiplication table of $\Gm$.
\end{exmm}

The cubic trace identity~\eqref{eq:cubic-intro} is therefore a \emph{necessary} feature of
orbit Gram data. The remainder of the paper studies its role as a \emph{sufficient}
certificate: we ask whether a family of orthogonal matrices obeying~\eqref{eq:cubic-intro}
and a handful of normalization conditions must be a permutation representation. As a first
consequence of the identity we record two orthogonality relations among the
\emph{entries} of the family, viewed across the group index.



\section{Recovering the group via an optimization principle}\label{sec:recovery}

We now present the main recovery strategy. Starting from a family of orthogonal matrices
about which we assume \emph{no} group structure a priori, we impose, in sequence, four
conditions and show that they force the family to be a permutation representation, then a
group, and finally the right regular representation. The four conditions are:
\begin{enumerate}[label=(\roman*)]
\item \textbf{Frobenius orthogonality:}
$\Tr[P_g P_{g'}^{*}]=n\,\delta_{g,g'}$;
\item \textbf{Max-row-sum condition:}
$\displaystyle\max_{g,g'}\sum_{h=1}^{n}\big|(P_{g'})_{g,h}\big|\ge 1$;
\item \textbf{Sum constraint:}
$\displaystyle\sum_{g'\in\Gm} P_{g'}=J$, the all-ones matrix;
\item \textbf{Cubic trace identity:}
$\Tr[P_g P_{g'} P_h^{*}]=n\,(P_{g'})_{g,h}$.
\end{enumerate}

\subsection{From orthogonality to signed permutations}

\begin{thm}\label{thm:signed-perm}
Let $\Gm$ be an index set of order $n$ and let $\{P_{g'}\in\BR^{n\times n}:g'\in\Gm\}$ be
orthogonal matrices with $P_e=I$ satisfying \textnormal{(i)} and \textnormal{(ii)}. Then every
$P_{g'}$ is a signed permutation matrix.
\end{thm}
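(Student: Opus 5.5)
The plan starts by comparing the $\ell^1$ and $\ell^2$ norms of the rows. Each $P_{g'}$ is orthogonal, so every row $r$ of $P_{g'}$ has $\|r\|_2=1$. Since $\|r\|_1\ge\|r\|_2$, this gives
\[
\sum_{h}|(P_{g'})_{g,h}|\ \ge\ 1
\]
for every $g,g'$. Equality holds exactly when $r$ has a single nonzero entry, and that entry is then $\pm1$. So the whole proof hinges on reaching this equality case in every row of every $P_{g'}$.

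The second step would feed condition (i) and $P_e=I$ into this. Taking $g=e$ in (i) gives $\Tr P_{g'}=0$ for $g'\neq e$, and (i) as a whole makes the family $n^{-1/2}$-orthonormal in the Hilbert--Schmidt inner product. These facts would then be combined with (ii) to force every row onto the equality case above. Once every row has $\ell^1$ norm $1$, each row of $P_{g'}$ is $\pm$ a standard basis vector. Orthogonality of the rows then makes the column positions distinct, so $P_{g'}$ is a signed permutation matrix in the sense of Definition~\ref{def:perm}.

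The main obstacle is that, read literally, (ii) is a \emph{lower} bound, and by the first step that lower bound holds automatically for orthogonal matrices. It therefore adds nothing to (i). Moreover, (i) with $P_e=I$ cannot suffice alone. For $n=2$, take
\[
P_e=I,\qquad P_{g'}=\begin{pmatrix}a&b\\ b&-a\end{pmatrix},\qquad a^2+b^2=1,\ ab\neq0 .
\]
Then $P_{g'}$ is orthogonal, $\Tr[P_eP_{g'}^{*}]=0$, $\Tr[P_{g'}P_{g'}^{*}]=2$, and every row has $\ell^1$ norm $|a|+|b|>1$. So (i) and (ii) as worded both hold, yet $P_{g'}$ is not a signed permutation. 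The argument in the second step therefore cannot close under the literal hypotheses. The first thing I would check is whether the intended condition is the upper bound $\max_{g,g'}\sum_h|(P_{g'})_{g,h}|\le1$. Under that reading, the first step immediately forces every row to the equality case, and the theorem follows as described. As stated, though, I expect this step to fail, and the example above is a counterexample to the theorem in its printed form.
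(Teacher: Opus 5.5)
Your analysis is correct, and so is your counterexample. For $n=2$, take $P_e=I$ and $P_{g'}=\begin{pmatrix}a&b\\ b&-a\end{pmatrix}$ with $a^2+b^2=1$, $ab\neq0$. Both matrices are orthogonal, and $\Tr[P_eP_{g'}^{*}]=0$, $\Tr[P_{g'}P_{g'}^{*}]=2$, so (i) holds. Condition (ii) holds because every row of $P_{g'}$ has $\ell^1$-norm $|a|+|b|>1$. In fact (ii) follows from $P_e=I$ alone, since the rows of $I$ have $\ell^1$-norm $1$. Yet $P_{g'}$ is not a signed permutation. So the theorem as printed is false, and no proof of it can close.

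The paper's proof begins with exactly your first step. It then asserts that a row with $\|r\|_1\ge 1$ and $\|r\|_2=1$ must equal $\pm e_i$. This reverses the equality case of $\|r\|_1\ge\|r\|_2$, and it is precisely the step your example breaks.

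The rest of the paper's argument would not close even if one such row were granted, and $P_e=I$ grants one for free. First, ``repeating the argument row by row'' has nothing to repeat: $1\oplus Q$, with $Q$ orthogonal but not a signed permutation, has first row $e_1$ while its other rows are not signed basis vectors. Second, ``because the family is mutually orthogonal, the same argument applies to every member'' is contradicted by your own example, which is Hilbert--Schmidt orthogonal to $I$.

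Your repair is the reading under which the statement holds: take (ii) as $\max_{g,g'}\sum_h|(P_{g'})_{g,h}|\le 1$, so every row of every member has $\ell^1$-norm at most $1$. The conclusion then follows from the orthogonality of each $P_{g'}$ alone, with (i) and $P_e=I$ playing no role.

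Your example does not by itself contradict Theorem~\ref{thm:summary}, since for $n=2$ condition (iii) forces $a=0$, $b=1$. It shows only that the route to that theorem through this step is broken.
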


\begin{proof}
Condition (i) says the family is mutually orthogonal in the Hilbert Schmidt inner product,
hence linearly independent; each $\|P_g\|_\HS=\sqrt{n}$.

Fix $g'$ and a row index $g$, and let
$r=\big((P_{g'})_{g,1},\dots,(P_{g'})_{g,n}\big)$ be the corresponding row. Orthogonality of
$P_{g'}$ gives $\|r\|_2=1$. By the $\ell^1$--$\ell^2$ inequality, $\|r\|_1\le\sqrt{n}\|r\|_2
=\sqrt{n}$; moreover $\|r\|_1\ge\|r\|_2=1$, with equality $\|r\|_1=1$ if and only if
$r=\pm e_i$ for some standard basis vector $e_i$. Condition (ii) furnishes a pair $(g_0,g_0')$
and a row with $\|r\|_1\ge1$; combined with $\|r\|_2=1$ this forces that row to be $\pm e_i$.

Since $P_{g_0'}$ is orthogonal, its remaining rows are orthonormal and orthogonal to $e_i$,
hence supported off coordinate $i$. Repeating the argument row by row, every row of
$P_{g_0'}$ equals $\pm e_{i_j}$ with the indices $i_j$ distinct; hence each column also has a
single nonzero entry $\pm1$, and $P_{g_0'}$ is a signed permutation matrix. Because the family
is mutually orthogonal, the same argument applies to every member, so every $P_{g'}$ is a
signed permutation matrix.
\end{proof}

\subsection{From signed permutations to permutations}

\begin{thm}\label{thm:perm}
Let $\{P_{g'}\in\BR^{n\times n}:g'\in\Gm\}$ be signed permutation matrices with $P_e=I$,
mutually Hilbert Schmidt orthogonal as in \textnormal{(i)}, and satisfying the sum
constraint \textnormal{(iii)}. Then each $P_{g'}$ is a permutation matrix; that is, every
nonzero entry equals $+1$.
\end{thm}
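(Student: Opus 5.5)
The plan is a counting argument on the supports, combined with the sum constraint. First I show that the supports of the $P_{g'}$ partition the index set $\Gm\times\Gm$. Each $P_{g'}$ is a signed permutation matrix, so its support has exactly $n$ positions. There are $n$ matrices, so the supports have total size $n^2$, which equals the number of positions in an $n\times n$ matrix. Now apply the sum constraint (iii): every entry of $\sum_{g'}P_{g'}$ equals $1$, so no position $(g,h)$ lies outside every support. Hence every position is covered at least once. Since the total count is exactly $n^2$, every position is covered exactly once, and the supports are pairwise disjoint.

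With the partition in hand the conclusion is immediate. Fix $(g,h)$. Exactly one $g'$ has $(P_{g'})_{g,h}\neq 0$, so the sum constraint reads $(P_{g'})_{g,h}=1$. Every nonzero entry of every $P_{g'}$ arises this way, so all of them equal $+1$. Each $P_{g'}$ is therefore a permutation matrix.

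The only step needing care is the covering claim in the first paragraph. Covering does not follow merely from the sum being nonzero; it follows because a position outside every support would give the sum the entry $0\neq 1$. This yields the bound $\sum_{g'}|\operatorname{supp}P_{g'}|\ge n^2$, and equality with $n\cdot n=n^2$ forces disjointness. I would note that the argument does not need condition (i) or $P_e=I$ beyond what is stated. Orthogonality (i) is consistent with the conclusion, since disjoint supports automatically give $\Tr[P_gP_{g'}^{*}]=0$ for $g\ne g'$, but it is not required for the deduction.
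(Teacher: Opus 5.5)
Your proof is correct, but it argues differently from the paper. The paper uses the Hilbert--Schmidt geometry. It expands $J=\sum_{g''}P_{g''}$ in the orthogonal family and compares coefficients, which amounts to pairing the sum constraint with a fixed $P_{g'}$ and using (i). This gives $\Tr(P_{g'}J)=\|P_{g'}\|_\HS^2=n$, so the $n$ entries $\pm1$ of $P_{g'}$ sum to $n$ and must all be $+1$. You replace this with a pigeonhole count on supports. Because every entry of the sum is $1\neq0$, the $n$ supports of size $n$ cover all $n^2$ positions and hence partition $\Gm\times\Gm$; then (iii) pins each nonzero entry to $+1$.

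The two arguments lean on different hypotheses. The paper's key step works one matrix at a time and does not need the count $|\Gm|=n$ in advance; orthogonality forces it anyway, since $|\Gm|\,n=\|J\|_\HS^2=n^2$. Your argument depends essentially on that count: with more than $n$ signed permutations, cancellations such as $[1]+[1]+[-1]=[1]$ become possible. In exchange, you never use (i). Your partition also shows that, for $n$ signed permutation matrices, (i) is a consequence of (iii), since disjoint supports give $\Tr[P_gP_{g'}^{*}]=0$ for $g\neq g'$. Neither proof uses $P_e=I$.
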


\begin{proof}
By mutual orthogonality, $\big\|\sum_{g'} P_{g'}\big\|_\HS^2=\sum_{g'}\|P_{g'}\|_\HS^2=n\cdot n
=n^2$, and $\|J\|_\HS^2=n^2$. The Cauchy--Schwarz inequality
$\langle\sum_{g'}P_{g'},J\rangle_\HS\le\|\sum_{g'}P_{g'}\|_\HS\|J\|_\HS=n^2$ holds with
equality iff $\sum_{g'}P_{g'}=cJ$ for some $c>0$; condition (iii) gives $c=1$ and hence
equality. Expanding $J$ in the orthogonal basis $\{P_{g'}\}$,
\[
J=\frac1n\sum_{g'}\Tr(P_{g'}J)\,P_{g'},
\]
and comparing with $\sum_{g'}P_{g'}=J$ forces $\tfrac1n\Tr(P_{g'}J)=1$, i.e.\
$\Tr(P_{g'}J)=n$, for every $g'$. But for a signed permutation matrix,
$\Tr(P_{g'}J)=\sum_{i,j}(P_{g'})_{i,j}=\sum_{k=1}^n(\pm1)$, a sum of $n$ terms each $\pm1$; it
equals $n$ only if every nonzero entry is $+1$. Hence each $P_{g'}$ is a permutation matrix.
\end{proof}

\begin{remark}
The signed-permutation-to-permutation step is closely related to Gibson's
characterization~\cite{gibson1980generalized}: an orthogonal matrix is a linear
combination of permutation matrices if and only if it is generalized doubly stochastic
with all row and column sums equal to $\pm1$. This refines an earlier necessary condition
of Kapoor~\cite{kapoor1975orthogonal}, and the theme is taken up again in recent work on
orthogonal permutative matrices~\cite{mandal2023orthogonal}. The sum constraint (iii)
selects, among the signed permutation matrices produced by
Theorem~\ref{thm:signed-perm}, exactly those whose nonzero entries are all $+1$.
\end{remark}

\subsection{From permutations to a group}

\begin{thm}\label{thm:group}
Let $\{P_g\}_{g\in\Gm}$ be mutually Hilbert Schmidt orthogonal permutation matrices with
$P_e=I$ satisfying the cubic trace identity \textnormal{(iv)}. Then the family is closed
under multiplication and under adjoints:
\begin{enumerate}[label=(\alph*)]
\item for all $g,g'\in\Gm$ there is $h\in\Gm$ with $P_g P_{g'}=P_h$;
\item for all $g'\in\Gm$ there is $k\in\Gm$ with $P_{g'}^{*}=P_k$.
\end{enumerate}
\end{thm}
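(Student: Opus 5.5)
The plan is to reduce both closure statements to the equality case of Lemma~\ref{lem:trace-fixed}: two permutation matrices with Hilbert Schmidt inner product $n$ coincide. The cubic trace identity (iv) will supply that inner product $n$ directly.

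For (a), fix $g,g'\in\Gm$ and put $M:=P_gP_{g'}$. As a product of permutation matrices, $M$ is a permutation matrix, so $\|M\|_\HS^2=n$. Since $P_{g'}$ is a permutation matrix, row $g$ of $P_{g'}$ contains exactly one entry equal to $1$. Call its column index $h_0\in\Gm$, so that $(P_{g'})_{g,h_0}=1$. This uses that rows and columns are indexed by the same set $\Gm$ that indexes the family, which is built into (iv). Applying (iv) with $h=h_0$ gives
\[
\langle M,P_{h_0}\rangle_\HS=\Tr\!\big[P_gP_{g'}P_{h_0}^{*}\big]=n\,(P_{g'})_{g,h_0}=n .
\]
The last part of Lemma~\ref{lem:trace-fixed} then forces $M=P_{h_0}$, which proves (a) with $h=h_0$. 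As a consistency check, for $h\neq h_0$ identity (iv) gives $\langle M,P_h\rangle_\HS=0$. This agrees with the mutual orthogonality (i) of the distinct members $P_{h_0}$ and $P_h$, so no contradiction can arise.

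For (b), I will use (a) to see that $\mathcal{F}=\{P_g:g\in\Gm\}$ is a finite set of invertible matrices closed under multiplication. Fix $g'$. The powers $P_{g'}^k$ all lie in $\mathcal{F}$ by induction on (a). Since $\mathcal{F}$ is finite, $P_{g'}^{a}=P_{g'}^{b}$ for some $a<b$. Cancelling by invertibility gives $P_{g'}^{m}=I$ with $m=b-a\ge 1$. Hence
\[
P_{g'}^{*}=P_{g'}^{-1}=P_{g'}^{m-1}\in\mathcal{F},
\]
where the case $m=1$ gives $P_{g'}^{0}=I=P_e$. This yields $k$ with $P_{g'}^{*}=P_k$.

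A more direct alternative for (b) is to take $g=e$ in (iv), which gives $\Tr[P_{g'}P_h^{*}]=n\,(P_{g'})_{e,h}$. By (i) this just says $(P_{g'})_{e,h}=\delta_{g',h}$. That identifies the column indices but does not immediately produce the adjoint, so the finite-semigroup argument is cleaner.

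The main point needing care is step (a). It requires correctly matching the row and column labels in (iv) with the family index, so that row $g$ of $P_{g'}$ really does select a member $P_{h_0}$ of the family. It also requires that the equality case of Cauchy--Schwarz gives genuine equality of matrices, not merely proportionality. Both follow from the setup and from Lemma~\ref{lem:trace-fixed}, so I expect no real obstacle beyond this bookkeeping.
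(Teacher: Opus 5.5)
Your proof is correct. Part (a) is the paper's argument: take the unique $h_0$ with $(P_{g'})_{g,h_0}=1$, read (iv) as $\langle P_gP_{g'},P_{h_0}\rangle_\HS=n$, and apply the equality case for permutation matrices.

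Part (b) follows a different route. The paper does not use (a) there. Instead it specializes (iv) in the other free slot, $h=e$, rather than the $g=e$ you tried. This gives $\Tr[P_gP_{g'}]=\langle P_g,P_{g'}^{*}\rangle_\HS=n\,(P_{g'})_{g,e}$. Now take $g=k$ to be the unique row in which column $e$ of $P_{g'}$ has its $1$. Then $\langle P_k,P_{g'}^{*}\rangle_\HS=n$, so $P_{g'}^{*}=P_k$ by the same equality case as in (a).

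That route reuses the identity in one line and names the inverse explicitly: $k$ is read off from column $e$ of $P_{g'}$. The paper then packages this as the involution $\iota$.

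Your finite-semigroup argument uses (iv) only through (a), together with finiteness of the family and $P^{*}=P^{-1}$ for permutation matrices. It is the standard fact that a finite, multiplicatively closed set of invertible matrices is closed under inversion. It is therefore more robust, since it needs no further trace identity once closure is known. However, it identifies $k$ only implicitly, as the label of $P_{g'}^{m-1}$.

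Your remark that the direct route ``does not immediately produce the adjoint'' is accurate only for the $g=e$ specialization; the $h=e$ specialization does produce it.
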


\begin{proof}
(a) Since $P_g,P_{g'}$ are permutation matrices, $A:=P_gP_{g'}$ is a permutation matrix with
$\|A\|_\HS=\sqrt n$. The cubic trace identity reads $\langle A,P_h\rangle_\HS=\Tr(A P_h^{*})
=n\,(P_{g'})_{g,h}\in\{0,n\}$. By Cauchy--Schwarz $|\langle A,P_h\rangle_\HS|\le n$, with
equality iff $A=\lambda P_h$. Equality occurs precisely when $(P_{g'})_{g,h}=1$, and since $A$ and $P_h$ are permutation matrices the scalar must be $\lambda=1$. For any fixed
$g\in\Gm$ there is a unique $h\in\Gm$ with $(P_{g'})_{g,h}=1$, since each row of the
permutation matrix $P_{g'}$ contains exactly one nonzero entry. Hence $P_gP_{g'}=P_h$.

(b) Taking adjoints in the cubic trace identity gives
$\Tr[P_h P_{g'}^{*} P_g^{*}]=n\,(P_{g'}^{*})_{h,g}$. Setting $h=e$,
$\Tr[P_{g'}^{*}P_g^{*}]=\langle P_{g'}^{*},P_g\rangle_\HS=n\,(P_{g'}^{*})_{e,g}$. By orthogonality
this inner product is $n$ exactly when $P_{g'}^{*}=P_g$, and $(P_{g'}^{*})_{e,g}\in\{0,1\}$
equals $1$ at exactly one position $g=k$; hence $P_{g'}^{*}=P_k$. This defines an inversion
map
\[
\iota:\Gm\to\Gm,\qquad P_{\iota(g)}:=P_g^{*} ,
\]
and since $(P_g^{*})^{*}=P_g$, the map satisfies $\iota(\iota(g))=g$; that is, $\iota$ is an
involution on $\Gm$. Writing $g^{-1}:=\iota(g)$, so that $P_{g^{-1}}=P_g^{*}$, the family is
closed under inversion.
\end{proof}

\begin{cor}\label{cor:group-structure} Let $\{P_g\}_{g\in\Gm}\subset\BR^{n\times n}$ be mutually orthogonal permutation matrices that
form a group under matrix multiplication, with $P_e=I$. Then $\Gm$ carries a unique group structure
under which $g\mapsto P_g$ is a group monomorphism $P:\Gm\hookrightarrow\Perm(n)$.
\end{cor}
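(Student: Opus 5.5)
The plan is to transport the group structure from the matrix set $\mathcal{P}:=\{P_g\}_{g\in\Gm}$ back to the index set along the map $P:\Gm\to\mathcal{P}$, $g\mapsto P_g$. First I show this map is a bijection. Surjectivity holds by definition. For injectivity, mutual Hilbert Schmidt orthogonality gives $\Tr(P_gP_{g'}^{*})=0$ for $g\neq g'$, while $\Tr(P_gP_g^{*})=n\neq 0$, so $g\neq g'$ forces $P_g\neq P_{g'}$. Thus $|\mathcal{P}|=n$, and $P$ identifies $\Gm$ with $\mathcal{P}$ as sets.

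Next I define the operations on $\Gm$. By hypothesis $\mathcal{P}$ is a group under matrix multiplication; this is also what Theorem~\ref{thm:group} delivers under (iv). So for $g,g'\in\Gm$ there is a unique $h$ with $P_gP_{g'}=P_h$, and I set $g\cdot g':=h$. Associativity comes from associativity of matrix multiplication together with injectivity of $P$. The element $e$ is the identity because $P_e=I$. Inverses come from $P_g^{-1}=P_g^{*}\in\mathcal{P}$ (using part (b) of Theorem~\ref{thm:group}, or the group hypothesis), which gives $g^{-1}=\iota(g)$. Under this structure $P_{g\cdot g'}=P_gP_{g'}$ holds by construction, so $P$ is a homomorphism. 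It is injective by the first step, and its image consists of permutation matrices, which I identify with $\Perm(n)$. Hence $P$ is a monomorphism $\Gm\hookrightarrow\Perm(n)$.

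For uniqueness, suppose $\ast$ is any group law on $\Gm$ for which $P$ is a homomorphism. Then $P_{g\ast g'}=P_gP_{g'}=P_{g\cdot g'}$, and injectivity of $P$ gives $g\ast g'=g\cdot g'$. Its identity and inverses are then forced as well.

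The only step with real content is injectivity, that is, that distinct indices give distinct matrices. This is exactly where Hilbert Schmidt orthogonality is used. Everything else is the standard transport-of-structure argument along a bijection.
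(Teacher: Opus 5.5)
Your proposal is correct and follows essentially the same transport-of-structure argument as the paper: injectivity of $g\mapsto P_g$ from Hilbert--Schmidt orthogonality, the product $g\cdot g'$ defined through closure, and identity, associativity and inverses inherited from the matrix group. Your explicit uniqueness step, that any group law making $P$ a homomorphism must agree with $\cdot$ by injectivity, spells out a point the paper leaves implicit in its ``well defined'' remark.
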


\begin{proof}
Define $g\cdot g':=h$ whenever $P_gP_{g'}=P_h$; this is well defined by closure and by the
injectivity $P_g=P_{g'}\Rightarrow\langle P_g,P_{g'}\rangle_\HS=n\Rightarrow g=g'$. The identity
is $e$ since $P_e=I$; associativity is inherited from matrix multiplication; inverses exist
since $P_g^{*}=P_g^{-1}=P_{g^{-1}}$ lies in the family. The map $g\mapsto P_g$ is then an
injective homomorphism.
\end{proof}

\subsection{Identification as the right regular representation}

\begin{thm}\label{thm:rrr}
Let $\{P_g\}_{g\in\Gm}$ be mutually orthogonal permutation matrices forming a group, so that
$P:\Gm\to\Perm(n)$ is a monomorphism. Then $\{P_g\}_{g\in\Gm}$ is the right regular
representation of $\Gm$ if and only if the cubic trace identity holds:
\[
\Tr[P_k P_g P_h^{*}] = n\,(P_g)_{k,h}\qquad\forall\, g,k,h\in\Gm.
\]
\end{thm}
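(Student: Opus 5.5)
The plan is to prove the two implications separately. The forward direction is already in the paper; the converse follows from a direct trace computation that uses only the homomorphism property and Hilbert--Schmidt orthogonality. Throughout, rows and columns of each $P_g$ are indexed by the same set $\Gm$ that labels the family. By \eqref{eq:reg-entries} the right regular representation has entries $(R_g)_{k,h}=\delta_{k,\,hg^{-1}}=\delta_{h,\,kg}$. So the goal is to show that $(P_g)_{k,h}=\delta_{h,kg}$ for all $g,k,h$ exactly when the cubic identity holds.

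\emph{($\Rightarrow$).} If $P_g=R_g$ for all $g$, the identity is exactly the verification of condition (3) in the forward direction of Theorem~\ref{thm:cubic-iff}. There $P_kP_gP_h^{*}=R_{kgh^{-1}}$ and $\Tr(R_x)=n\,\delta_{x,e}$ by Lemma~\ref{lem:trace-fixed}. Both sides therefore equal $n$ precisely when $h=kg$, and vanish otherwise. I would simply cite that computation.

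\emph{($\Leftarrow$).} First I record the trace of a general member of the family. Since $P$ is a homomorphism, $P_e=I$. Mutual orthogonality then gives $\Tr(P_x)=\Tr(P_xP_e^{*})=n\,\delta_{x,e}$ for every $x\in\Gm$. Next, using $P_h^{*}=P_h^{-1}=P_{h^{-1}}$ (the $P_h$ are permutation matrices and $P$ is a homomorphism), I get $P_kP_gP_h^{*}=P_{kgh^{-1}}$. Hence
\[
\Tr[P_kP_gP_h^{*}]=n\,\delta_{kgh^{-1},e}=n\,\delta_{h,kg}.
\]
Substituting into the assumed cubic identity and dividing by $n$ yields $(P_g)_{k,h}=\delta_{h,kg}=(R_g)_{k,h}$ for all $g,k,h$. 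Thus $P_g=R_g$ entrywise.

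The only delicate point is bookkeeping rather than analysis. The statement ``$\{P_g\}$ is the right regular representation'' must be read with respect to the group law on $\Gm$ furnished by Corollary~\ref{cor:group-structure}, and with the matrix indices identified with that same labeled set. I would state this convention explicitly at the start. Once it is fixed, the equality $P_g=R_g$ is literal, not merely up to a permutation of basis vectors. I would also note that the cubic identity fixes the position of every nonzero entry, so no relabeling freedom remains.
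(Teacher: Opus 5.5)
Your proposal is correct and follows the paper's proof essentially line for line. The forward direction is the same homomorphism and fixed-point computation, and for the converse you likewise combine $P_kP_gP_h^{*}=P_{kgh^{-1}}$ with $\Tr(P_x)=n\,\delta_{x,e}$ to read off $(P_g)_{k,h}=\delta_{k,hg^{-1}}$; the one step you add is explicit: you derive $\Tr(P_x)=n\,\delta_{x,e}$ from Hilbert--Schmidt orthogonality to $P_e=I$, which the paper uses tacitly and which is genuinely needed, since faithfulness alone does not make a nonidentity permutation fixed-point-free.
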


\begin{proof}
$(\Rightarrow)$ For the right regular representation $(P_g)_{k,h}=\delta_{k,\,h g^{-1}}$ and
$P_kP_gP_h^{*}=P_{kgh^{-1}}$, so $\Tr(P_kP_gP_h^{*})=n\,\delta_{kgh^{-1},e}=n\,\delta_{k,hg^{-1}}
=n\,(P_g)_{k,h}$.

$(\Leftarrow)$ Conversely, group closure gives $P_kP_gP_h^{*}=P_{kgh^{-1}}$, so
$\Tr(P_kP_gP_h^{*})=n\,\delta_{kgh^{-1},e}=n\,\delta_{k,hg^{-1}}$. Comparing with the hypothesis,
$(P_g)_{k,h}=\delta_{k,hg^{-1}}$, hence $P_g\delta_h=\delta_{hg^{-1}}$, which is exactly the
action of the right regular representation.
\end{proof}

Collecting Theorems~\ref{thm:signed-perm}--\ref{thm:rrr} and
Corollary~\ref{cor:group-structure} yields the main result of the paper.

\begin{thm}[Main recovery theorem]\label{thm:summary}
Let $\Gm$ be an index set of order $n$ and let
\[
\{P_g\in\BR^{n\times n}: g\in\Gm,\ P_e=I,\ P_g\ \text{orthogonal}\}
\]
satisfy the four conditions
\begin{enumerate}[label=(\roman*)]
\item $\Tr[P_g P_{g'}^{*}]=n\,\delta_{g,g'}$;
\item $\displaystyle\max_{g,g'}\sum_{h=1}^{n}|(P_{g'})_{g,h}|\ge 1$;
\item $\displaystyle\sum_{g'\in\Gm}P_{g'}=J$;
\item $\Tr[P_g P_{g'} P_h^{*}]=n\,(P_{g'})_{g,h}$.
\end{enumerate}
Then:
\begin{itemize}
\item each $P_g$ is a permutation matrix;
\item $\{P_g\}_{g\in\Gm}$ forms a group under matrix multiplication;
\item there is an involution $\iota:\Gm\to\Gm$ with $P_{\iota(g)}=P_g^{*}$ and
$\iota^2=\mathrm{id}$, namely the group inversion;
\item $\{P_g\}_{g\in\Gm}$ is the right regular representation of $\Gm$.
\end{itemize}
In particular, the abstract group $\Gm$ is recovered from the orbit Gram data.
\end{thm}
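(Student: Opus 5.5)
The plan is to chain the four preceding results in the order in which conditions (i)--(iv) are listed, checking at each stage that the hypotheses of the next result are met.

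\emph{Step 1: permutation matrices.} The family consists of orthogonal matrices with $P_e=I$ satisfying (i) and (ii), so Theorem~\ref{thm:signed-perm} makes every $P_g$ a signed permutation matrix. The family is then signed permutation matrices, mutually Hilbert Schmidt orthogonal, with $P_e=I$, and satisfies the sum constraint (iii). Theorem~\ref{thm:perm} upgrades each $P_g$ to a genuine permutation matrix, which is the first bullet.

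\emph{Step 2: group and inversion.} We now have mutually orthogonal permutation matrices with $P_e=I$ satisfying the cubic identity (iv). Theorem~\ref{thm:group}(a) gives closure under products. Theorem~\ref{thm:group}(b) gives closure under adjoints and produces the involution $\iota$ with $P_{\iota(g)}=P_g^{*}$ and $\iota^2=\mathrm{id}$.

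Closure under products and adjoints, the presence of $I$, and $P_g^{*}=P_g^{-1}$ for permutation matrices together show that $\{P_g\}$ is a subgroup of $\Perm(n)$. This is the second bullet. Corollary~\ref{cor:group-structure} then transfers the multiplication to the index set $\Gm$, with $g\mapsto P_g$ an injective homomorphism. Injectivity comes from (i): $P_g=P_{g'}$ forces $\Tr[P_gP_{g'}^{*}]=n$, hence $g=g'$.

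For the third bullet, the inverse of $g$ in this structure is the unique $k$ with $P_k=P_g^{-1}=P_g^{*}$. By uniqueness this $k$ is $\iota(g)$, so $\iota$ is exactly group inversion.

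\emph{Step 3: right regular representation.} With the group structure and the monomorphism $P$ in hand, the cubic identity (iv) is precisely the hypothesis of the ``$\Leftarrow$'' direction of Theorem~\ref{thm:rrr}. That direction yields $(P_g)_{k,h}=\delta_{k,hg^{-1}}$, i.e.\ $P_g\delta_h=\delta_{hg^{-1}}$, which is the fourth bullet. The final sentence follows because the multiplication on $\Gm$ is read off from the entries $(P_{g'})_{g,h}$ of the family, and by the first part of the proof of Theorem~\ref{thm:group} these entries determine $gg'$.

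\emph{Main obstacle.} The step needing the most care is the first, because Theorem~\ref{thm:signed-perm} as stated extracts from (ii) a single row of a single matrix with $\ell^1$-norm at least one and then asserts the conclusion for every member. In the write-up I will make this propagation explicit, and if that argument does not suffice on its own, I will supply the missing input from (iii) and (iv) rather than from (ii) alone. Beyond that, Step~3 is a bookkeeping point: the index $e$ appearing in (iv) and in Theorem~\ref{thm:rrr} must be the same index as the identity of the group from Corollary~\ref{cor:group-structure}. This holds because $P_e=I$ and $P$ is injective.
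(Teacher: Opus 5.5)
Your chain is the paper's chain: Theorems \ref{thm:signed-perm}, \ref{thm:perm}, \ref{thm:group}, Corollary \ref{cor:group-structure}, Theorem \ref{thm:rrr}. Steps 2 and 3 are sound once permutation matrices are available. The gap is in Step 1, and it is worse than the propagation issue you flag, because condition (ii) carries no information at all. Every row $r$ of an orthogonal matrix satisfies $\|r\|_1\ge\|r\|_2=1$, so (ii) holds for every orthogonal family. The proof of Theorem \ref{thm:signed-perm} treats $\|r\|_1\ge 1$ as if it forced $r=\pm e_i$; that conclusion would need $\|r\|_1\le 1$. In fact Theorem \ref{thm:signed-perm} is false already for $n=2$: $P_e=I$ and $P_a=\tfrac{1}{\sqrt2}\begin{pmatrix}1&1\\1&-1\end{pmatrix}$ satisfy its hypotheses, but $P_a$ is not a signed permutation matrix. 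So you cannot cite it. The paper's proof of the main theorem rests on it and has the same hole.

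Your fallback, drawing the missing input from (iii) and (iv), cannot succeed either, because the statement as written is false. Take a real Hadamard matrix $H$ of order $n=12$ whose first row and first column are all ones. Index its rows by $\Gm$ with $e$ the first row, set $D_g=\operatorname{diag}(H_{g,1},\dots,H_{g,n})$, and put $P_g=\tfrac1n H D_g H^{\top}$.

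Using $H^{\top}H=HH^{\top}=nI$, you get $P_e=I$ and each $P_g$ orthogonal. Moreover
\[
\begin{aligned}
\Tr[P_gP_{g'}^{*}]&=\Tr(D_gD_{g'})=n\,\delta_{g,g'},\\
\Tr[P_gP_{g'}P_h^{*}]&=\sum_{x}H_{g,x}H_{g',x}H_{h,x}=n\,(P_{g'})_{g,h}.
\end{aligned}
\]
Since the columns of $H$ other than the first sum to zero, $\sum_{g'}D_{g'}$ has $n$ in its first diagonal entry and zeros elsewhere. Hence $\sum_{g'}P_{g'}=(He_1)(He_1)^{\top}=J$. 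Thus (i)--(iv) all hold.

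However, row $g$ of $P_{g'}$ is a standard basis vector exactly when the entrywise product of rows $g$ and $g'$ of $H$ is again a row of $H$. If every $P_{g'}$ were a permutation matrix, the twelve rows would form a subgroup of $\{\pm1\}^{12}$ of order $12$, which is impossible since that order is not a power of $2$.

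So no argument from (i)--(iv) yields permutation matrices. A true version needs a genuinely stronger hypothesis in place of (ii). One option is $\sum_h|(P_{g'})_{g,h}|\le 1$ for all $g,g'$; then every row of every $P_{g'}$ is some $\pm e_i$ immediately. Another is entrywise nonnegativity of the $P_g$. With either, the rest of your chain goes through as you wrote it.
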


\begin{proof}
Conditions (i)--(ii) and Theorem~\ref{thm:signed-perm} make every $P_g$ a signed permutation
matrix; adding (iii) and Theorem~\ref{thm:perm} upgrades these to permutation matrices.
Theorem~\ref{thm:group} and Corollary~\ref{cor:group-structure}, using (i) and (iv), give the
group structure and the inversion involution $\iota(g):=g^{-1}$. Finally
Theorem~\ref{thm:rrr}, using (iv), identifies the family as the right regular representation.
\end{proof}

\begin{remark}
Conditions (i)--(iv) are not independent in the presence of a group frame: by
Theorem~\ref{thm:cubic-iff} the regular representation satisfies all of them. The content of
Theorem~\ref{thm:summary} is the converse direction, that these trace- and
orthogonality-level conditions are jointly \emph{sufficient} to certify group structure
without any prior labeling of the matrices by group elements. This is exactly the form of
certificate needed when the Gram data are received as an unstructured orbit.
\end{remark}


Theorem~\ref{thm:summary} gives an exact structural certificate, but recovering a family
$\{P_g\}$ from a measured Gram matrix is a nonconvex problem. We formulate it as an
optimization over orthogonal matrices and describe the strategies used in our experiments.

\subsection{The objective}
Let $G_{\mathrm{noisy}}\in\BR^{n\times n}$ be an observed Gram matrix, assumed to arise (up
to noise) from a unitary group action on a fixed unit vector $v\in\BR^n$. Treating the family
$\{P_g\}_{g\in\Gm}\subset\BR^{n\times n}$ and $v$ as variables, and writing
$J:=\sum_{g}P_g$, we minimize
{\small
\begin{equation}\label{eq:objective}
\begin{aligned}
L(\{P_g\},v)
&:=\ \Big\|\,G_{\mathrm{noisy}}-\sum_{g\in\Gm} P_g\, vv^{*} P_g^{*}\Big\|_F^2
+\sum_{g\in\Gm}\big\|P_g^{*}P_g-I\big\|_F^2 \\
&+\sum_{g,g',h\in\Gm}\big(\Tr(P_g P_{g'} P_h^{*})-n\,(P_{g'})_{g,h}\big)^2
+\Big\|\sum_{g\in\Gm}P_g-J\Big\|_F
+\Big(1-\max_{g,g'}\sum_{h=1}^{n}|(P_{g'})_{g,h}|\Big).
\end{aligned}
\end{equation}
}
The five terms correspond, in order, to data fidelity to the orbit Gram matrix, orthogonality of each
$P_g$ from condition~\textnormal{(i)}, the cubic trace identity from~\textnormal{(iv)}, the sum constraint from~\textnormal{(iii)},
and the max-row-sum condition from~\textnormal{(ii)} of Theorem~\ref{thm:summary}. A global
minimizer with $L=0$ satisfies the hypotheses of Theorem~\ref{thm:summary} and therefore
yields the right regular representation of the hidden group.

\subsection{Computational strategies}
Because~\eqref{eq:objective} is nonconvex, enforcing all terms simultaneously is empirically
unstable. We instead introduce the constraints progressively, in the order established in
Section~\ref{sec:recovery}: orthogonality first, max-row-sum second, then the sum-to-$J$ constraint, and finally
the cubic trace identity. The iterates are initialized and maintained on the orthogonal group
$O(n)$, with each update retracted back to $O(n)$~\cite{absil2008optimization}, and the
weights on the later terms are increased as the iteration proceeds. Proposition~\ref{prop:schur}
is used throughout: because the right-regular algebra is closed under entrywise maps,
sparsifying nonlinearities may be applied to promote permutation structure without leaving the
algebra. Full implementation details are deferred to the experiments and the accompanying code.

The cubic trace identity plays a distinguished role. It is not merely one constraint among
several: the experiments below show that the remaining constraints admit configurations that
are \emph{not} group representations, and the cubic identity is what separates these from the
global solution. We therefore use it as the terminal criterion rather than as a term to be
traded off against the others.


\section{Experiments}\label{sec:exp}

We test the framework on two groups of increasing size: the dihedral group $D_4$ of order
$n=8$, acting on $\BR^2$ by the symmetries of a square, and the tetrahedral rotation group
$A_4$ of order $n=12$, acting on $\BR^3$ by the rotational symmetries of a regular
tetrahedron. In each case we fix a vector $f$, form the orbit $\{P_g^{\mathrm{true}} f\}$ under
the exact right regular representation, assemble the Gram matrix $G=f^{\top}f$ of the orbit, and
run the staged iteration of Section~\ref{sec:recovery} using only $G$. We then compare the recovered
family $\{P_g\}$ against the reference right regular representation $\{P_g^{\mathrm{true}}\}$, both
directly and after a best-match relabeling that realizes the group isomorphism implicit in
abstract recovery. All penalties are measured in the Frobenius norm; we write
$\varepsilon_{\mathrm{span}},\varepsilon_{\mathrm{HS}},\varepsilon_{J},
\varepsilon_{\mathrm{cubic}},\varepsilon_{\mathrm{closure}},\varepsilon_{\mathrm{perm}}$
for the residuals of the corresponding constraints.

\subsection{The dihedral group \texorpdfstring{$D_4$}{D4}}\label{ssec:d4}
For $D_4$ the method achieves essentially exact recovery. Starting from the orbit Gram matrix
of Figure~\ref{fig:d4-gram}, the staged iteration drives every constraint residual to near
machine precision within a few outer cycles (Figure~\ref{fig:d4-history}); the final residuals
are
\[
\varepsilon_{\mathrm{span}}=2.9\times10^{-16},
\varepsilon_{\mathrm{HS}}=1.6\times10^{-16},
\varepsilon_{J}=2.6\times10^{-16},
\varepsilon_{\mathrm{cubic}}=9.1\times10^{-16},
\varepsilon_{\mathrm{perm}}=3.8\times10^{-15}.
\]
The sum of the recovered matrices equals the all-ones matrix $J$ to within $10^{-12}$, confirming that the sum constraint and the permutation structure
are met exactly rather than approximately. In fact the recovered matrices coincide with the
exact right-regular matrices directly, in the output order: the best-match relabeling is the identity permutation, so no reordering is
required. The Frobenius distances $\|P_{(g)}-P^{\mathrm{true}}_{(h)}\|_F$ vanish to machine
precision on the diagonal and are bounded away from zero off it, the largest residual being
$1.1\times10^{-15}$, so the recovered family is the right regular representation of $D_4$ to
machine precision. This is what Theorem~\ref{thm:summary} leads one to expect: a family
meeting the four conditions can only be the right regular representation.
\begin{figure}[tbp]
\centering
\includegraphics[width=0.42\linewidth]{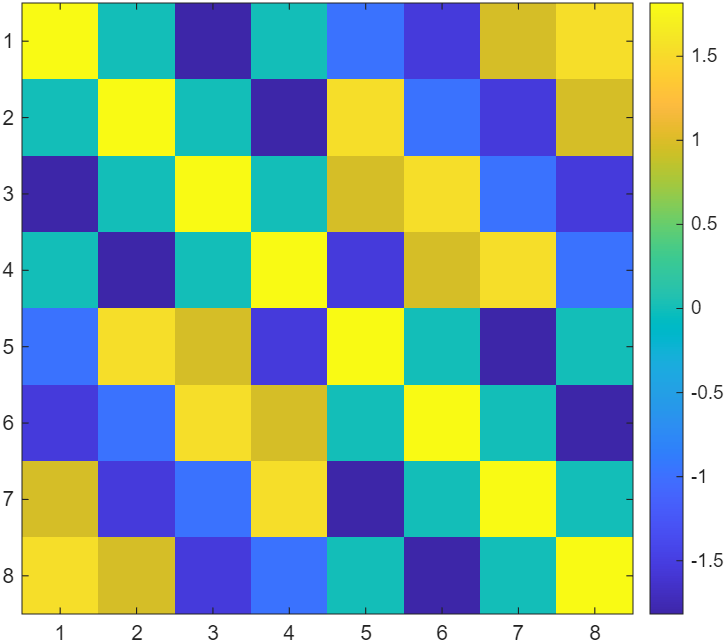}
\caption{$D_4$ experiment: the input orbit Gram matrix $G=f^{\top}f$.}
\label{fig:d4-gram}
\end{figure}

\begin{figure}[tbp]
\centering
\includegraphics[width=0.7\linewidth]{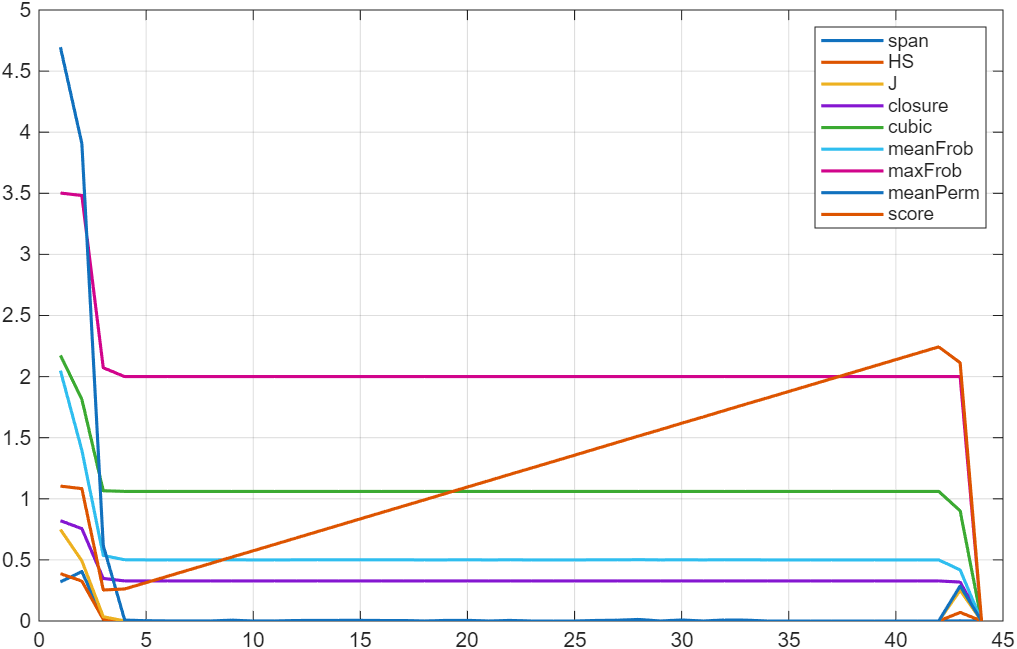}
\caption{$D_4$ recovery: metric history. All residuals collapse to machine precision.}
\label{fig:d4-history}
\end{figure}

\begin{figure}[tbp]
\centering
\begin{subfigure}[b]{0.8\linewidth}
\includegraphics[width=\linewidth]{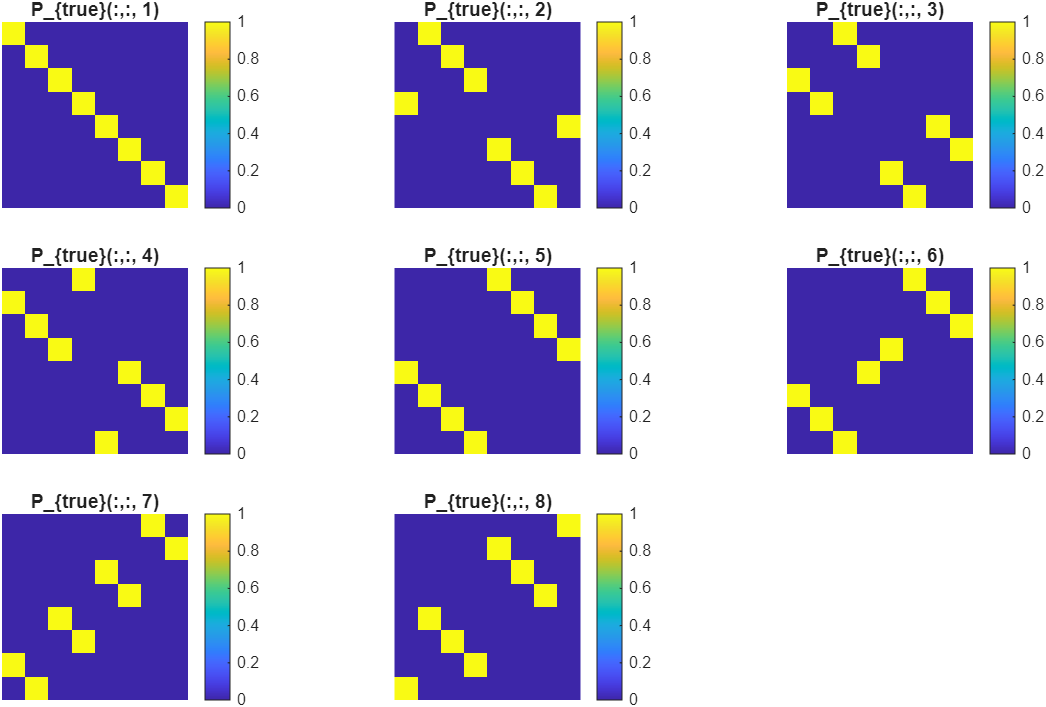}
\caption{Reference right regular representation of $D_4$.}
\label{fig:d4-true}
\end{subfigure}\\[1ex]
\begin{subfigure}[b]{0.8\linewidth}
\includegraphics[width=\linewidth]{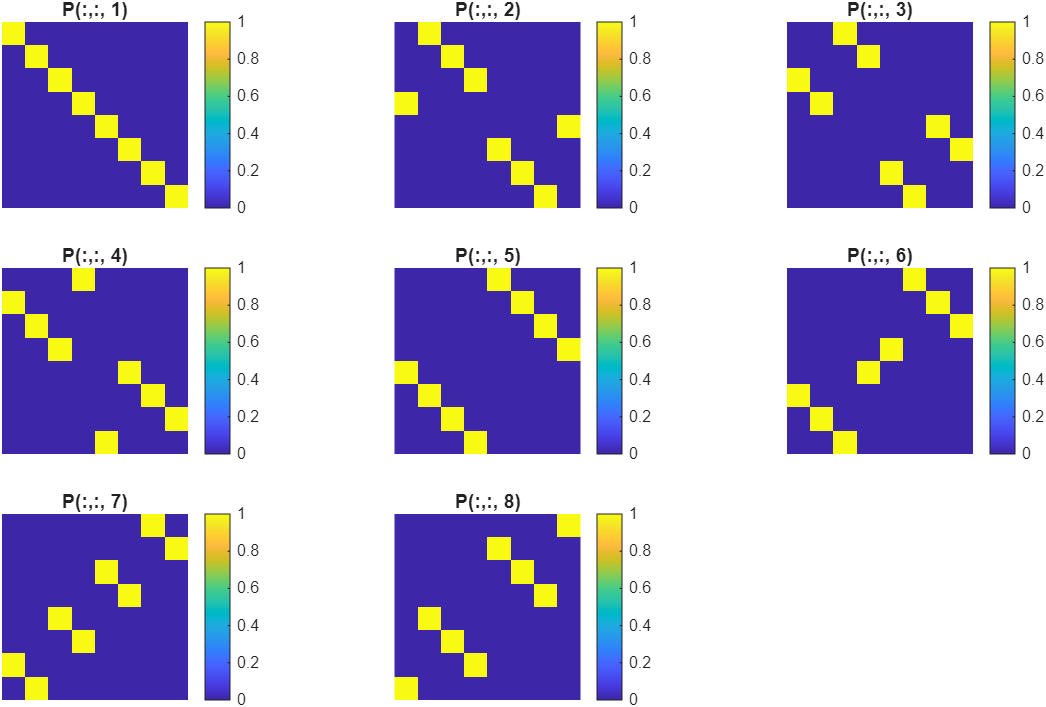}
\caption{Recovered matrices.}
\label{fig:d4-recovered}
\end{subfigure}
\caption{$D_4$: The eight reference right-regular matrices (top) and the eight recovered matrices
(bottom), shown in the output order. Each
recovered matrix is a permutation matrix coinciding with its reference counterpart to within
$10^{-15}$.}
\label{fig:d4-matrices}
\end{figure}

\FloatBarrier

\subsection{The tetrahedral rotation group \texorpdfstring{$A_4$}{A4}}\label{ssec:a4}
For $A_4$ the problem is larger ($n=12$, $d=3$) and recovery succeeds up to relabeling to
modest rather than machine precision. From the orbit Gram matrix of
Figure~\ref{fig:a4-gram}, the staged iteration reduces the structural residuals in stages
(Figure~\ref{fig:a4-history}), reaching final values
\[
\varepsilon_{\mathrm{span}}=5.1\times10^{-3},
\varepsilon_{\mathrm{HS}}=1.5\times10^{-3},
\varepsilon_{J}=6.5\times10^{-3},
\varepsilon_{\mathrm{cubic}}=2.5\times10^{-2},
\varepsilon_{\mathrm{closure}}=9.3\times10^{-3},\]
\[\varepsilon_{\mathrm{perm}}=1.1\times10^{-1}.
\]
Because the recovered family is unlabeled, a direct comparison against the reference ordering
is large (mean Frobenius error $3.27$, maximum $4.90$), purely reflecting the permuted labels:
two equal permutation matrices at different indices sit at Frobenius distance
$\sqrt{2n}\approx4.9$. Under the randomized restarts of the staged recovery strategy of
Section~\ref{sec:recovery}, which repeatedly perturbs the worst-defect matrices until every
target is met, the iteration converges to a state in which each recovered matrix corresponds
to exactly one reference matrix, already before any relabeling
(Figure~\ref{fig:a4-matching}); reindexing only sorts them into reference order, and the run
terminates on reaching the target rather than at an iteration cap. The recovery is uniform
across the family: the identity is recovered exactly, and the remaining eleven matrices lie
between $7.4\times10^{-3}$ and $4.5\times10^{-2}$ of their reference counterparts, with mean
$1.9\times10^{-2}$ (Figure~\ref{fig:a4-matrices}), so all twelve are recovered.

The recovery differs from $D_4$ in two respects. First, the residuals converge to modest
rather than machine precision: the sum of the recovered matrices matches $J$ to within
$\sim2\times10^{-2}$ (Figure~\ref{fig:a4-sumJ}), and $\varepsilon_{\mathrm{cubic}}=2.5\times10^{-2}$,
$\varepsilon_{\mathrm{perm}}=1.1\times10^{-1}$, against the $10^{-15}$ level seen for $D_4$; a
structural reason for this gap is taken up in Section~\ref{ssec:exp-disc}. Second, the recovered family arrives without labels. The index set carries no group structure
at the outset, so the order in which the matrices emerge is an artifact of the search and
means nothing on its own: there is no canonical ordering for the output to agree with. Figure~\ref{fig:a4-matching} shows the correspondence already in place before any reindexing: writing $D(g,h)=\|P_{g}-P^{\mathrm{true}}_{h}\|_F$, the dark cells forming a permutation matrix in which each recovered matrix meets exactly one
reference matrix. Reindexing applies that permutation so that the two families can be
compared, and belongs to the comparison rather than to the recovery.

\begin{figure}[tbp]
\centering
\begin{subfigure}[b]{0.42\linewidth}
\includegraphics[width=\linewidth]{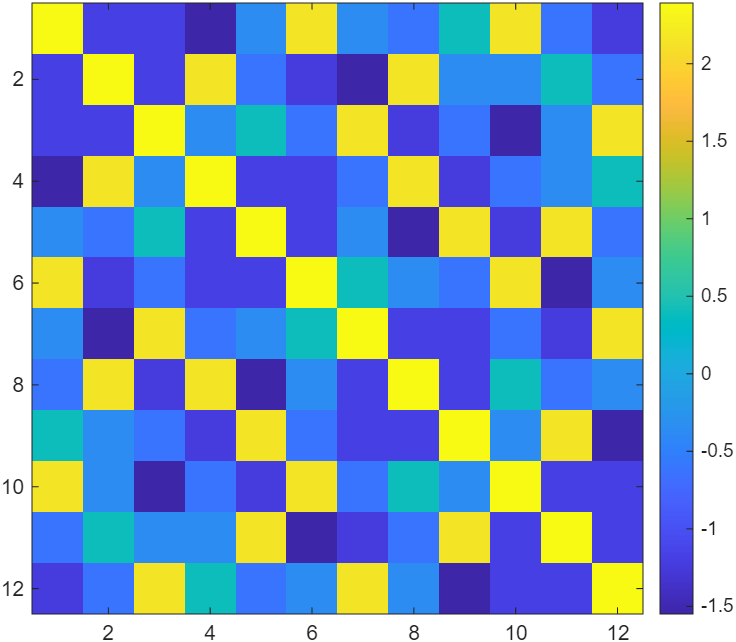}
\caption{Orbit Gram matrix $G=f^{\top}f$.}
\label{fig:a4-gram}
\end{subfigure}\hfill
\begin{subfigure}[b]{0.42\linewidth}
\includegraphics[width=\linewidth]{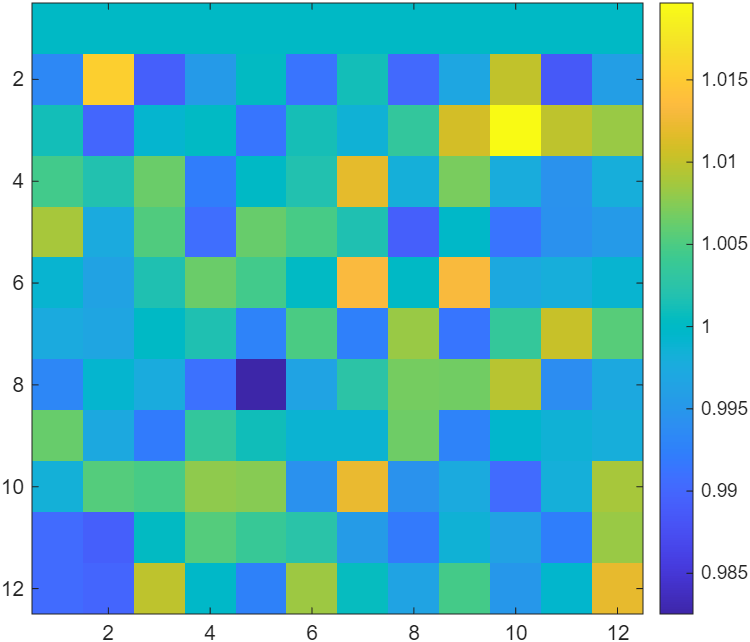}
\caption{$\sum_g P_g$, near $J$ to within $\sim2\times10^{-2}$.}
\label{fig:a4-sumJ}
\end{subfigure}
\caption{$A_4$ experiment: the input Gram matrix (left) and the sum of the recovered matrices
(right).}
\label{fig:a4-inputs}
\end{figure}

\begin{figure}[tbp]
\centering
\begin{subfigure}[b]{0.60\linewidth}
\includegraphics[width=\linewidth]{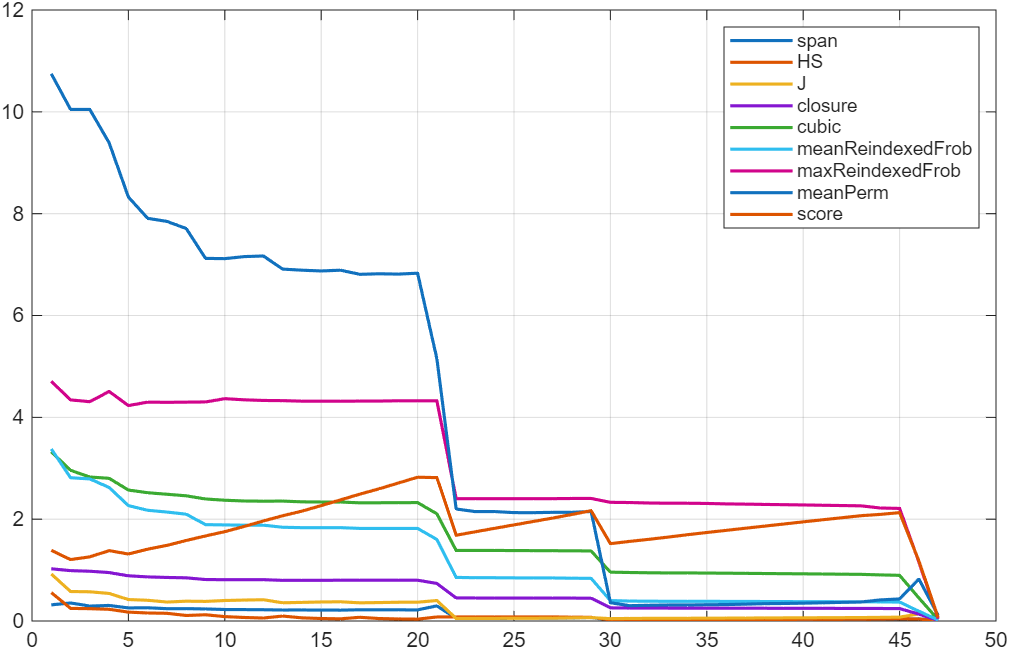}
\caption{Metric history with reindexed errors.}
\label{fig:a4-history}
\end{subfigure}\hfill
\begin{subfigure}[b]{0.36\linewidth}
\includegraphics[width=\linewidth]{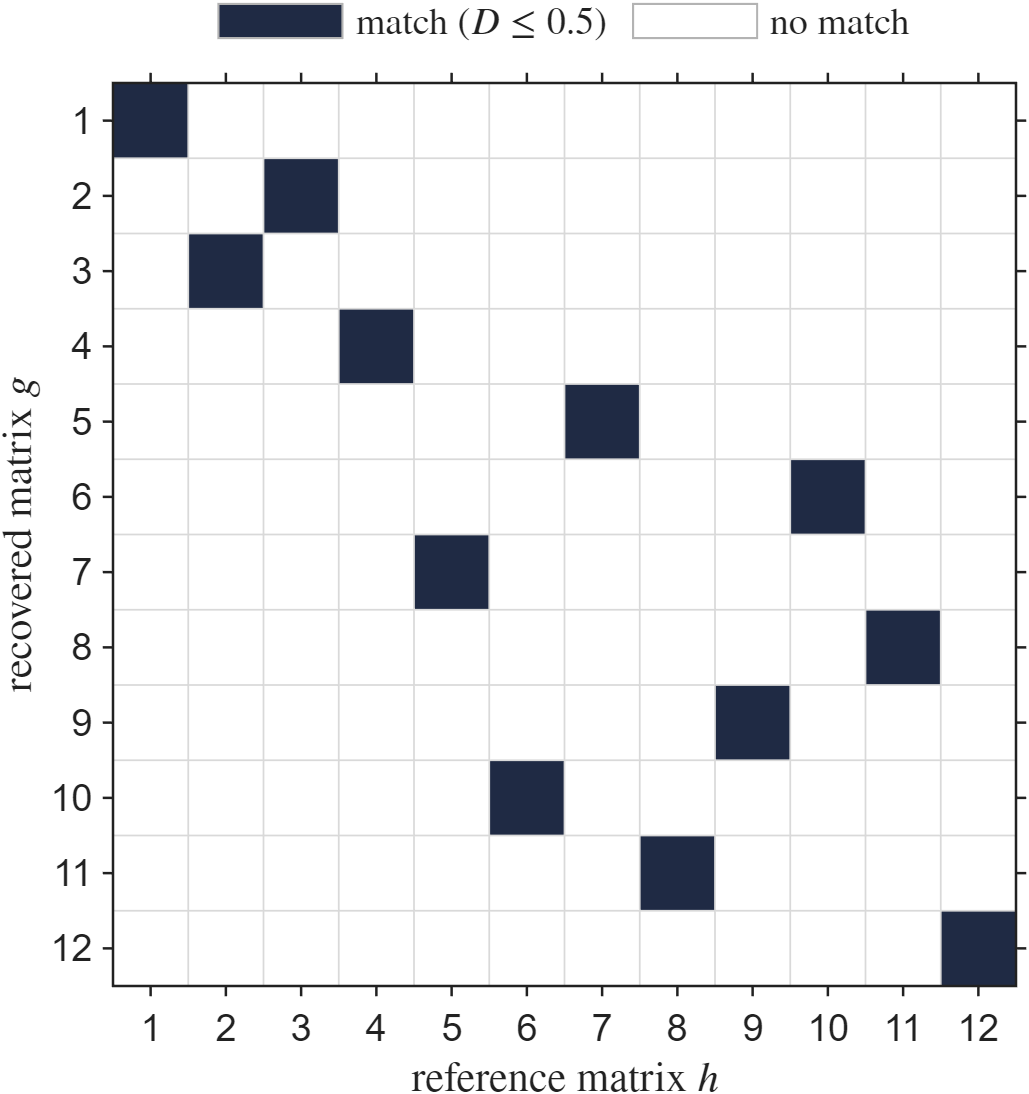}
\caption{Matching to the reference family, before reindexing.}
\label{fig:a4-matching}
\end{subfigure}
\caption{$A_4$ recovery. The residual history
(left) shows the staged decrease under the recovery strategy. The dark cells (right) form a permutation matrix, so each recovered
matrix corresponds to exactly one reference matrix before any reindexing. The identification is fixed only up to a group automorphism.}
\label{fig:a4-recovery}
\end{figure}

\begin{figure}[tbp]
\centering
\begin{subfigure}[b]{0.85\linewidth}
\includegraphics[width=0.85\linewidth]{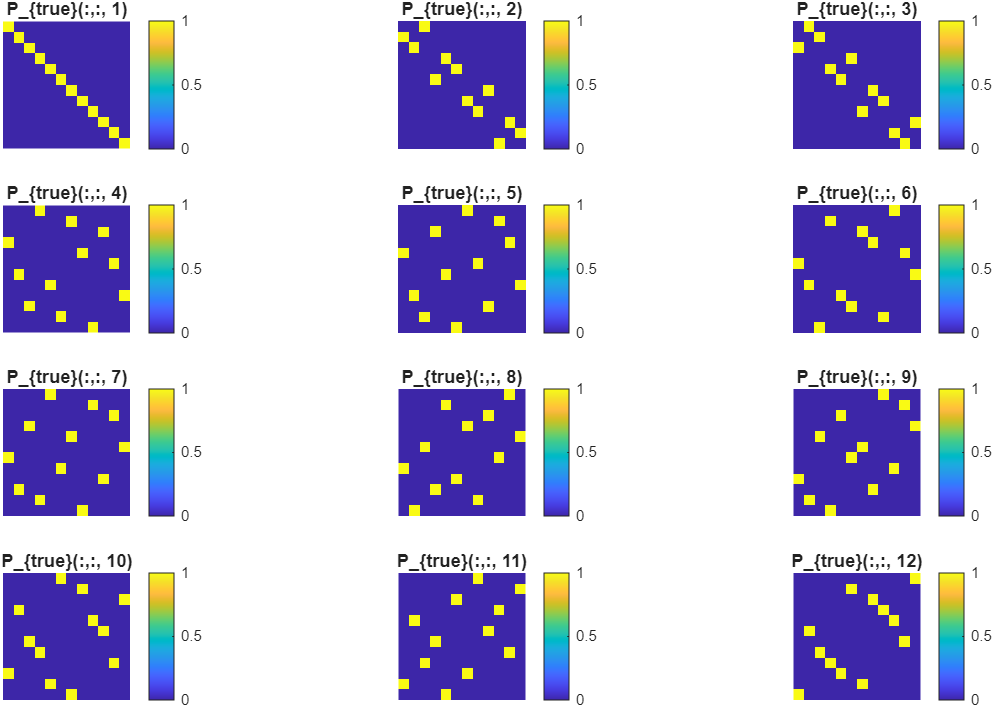}
\caption{Reference right regular representation of $A_4$.}
\label{fig:a4-true}
\end{subfigure}\\[1ex]
\begin{subfigure}[b]{0.85\linewidth}
\includegraphics[width=0.85\linewidth]{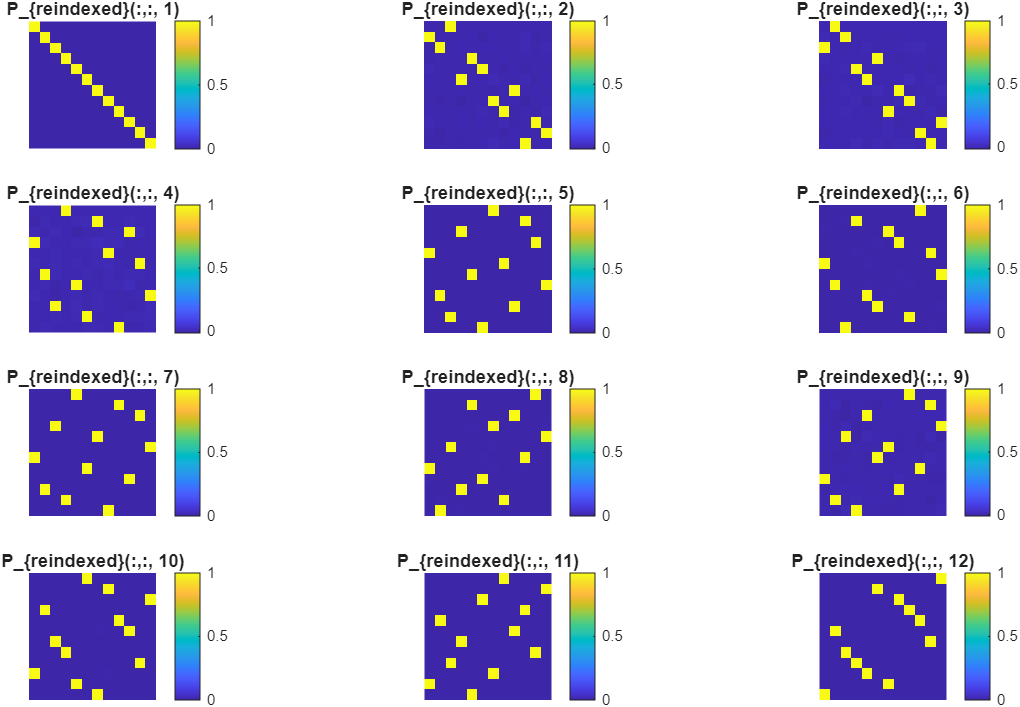}
\caption{Recovered matrices, after best-match reindexing.}
\label{fig:a4-recovered}
\end{subfigure}
\caption{$A_4$: the twelve reference right-regular matrices (top) and the reindexed recovered matrices
(bottom), in matched order. After reindexing all twelve recovered matrices are permutation
matrices coinciding with their exact counterparts, with maximum Frobenius error $4.5\times10^{-2}$.}
\label{fig:a4-matrices}
\end{figure}

\FloatBarrier

\subsection{Discussion}\label{ssec:exp-disc}
The two experiments run the same method at two scales. For $D_4$ every residual, the cubic
trace identity included, falls to near machine precision, and the recovered family coincides
with the reference family in the output order. For $A_4$ all twelve matrices are likewise recovered, up to a relabeling of the
index set, to within $4.5\times10^{-2}$ of their reference counterparts, but the
residuals settle at modest rather than machine precision. What separates the two runs is which
constraint carries the recovery. The orthogonality, span and sum-to-$J$ conditions can be met
by configurations that are not permutation representations, so satisfying them localizes the
search without settling it; the cubic trace identity
\[
\Tr(P_g P_{g'} P_h^{*})=n\,(P_{g'})_{g,h}
\]
is what distinguishes the correct configuration from these near misses. For $D_4$ the first three constraints already leave little room and the cubic identity is met; for $A_4$ the iteration has to be driven by it. Monitoring
$\varepsilon_{\mathrm{cubic}}$ is accordingly the reliable indicator of recovery quality.

A structural feature of the two orbits bears on this difference. By
Proposition~\ref{prop:schur} the right-regular algebra is closed under entrywise maps, so the
entrywise images of $G$ remain in $\mathcal{A}_R$, but they need not span it. Since
$G_{i,j}=c_{g_i^{-1}g_j}$, such a map acts only on the values of $c$, and the images span a
subspace of dimension equal to the number of distinct values. For $G$, symmetry forces
$c_g=c_{g^{-1}}$, so this count is bounded by the number of inverse pairs $\{g,g^{-1}\}$:
seven of eight for $D_4$, but only eight of twelve for $A_4$, whose three-cycles fall into
four such pairs. Among the four conditions it is the cubic trace identity that does the
decisive work for $A_4$: in our runs it is what carries the search out of a local minimum and
on to the right regular representation, to within the residuals reported above.

What separates the two runs is therefore a matter of optimization rather than of the
characterization itself: Theorem~\ref{thm:summary} applies equally to both groups, and what
differs is only how sharply the iteration can be driven toward the family it describes. These
experiments are intended to illustrate the theorem rather than to compete with methods
developed for group recovery at scale.


\section{Conclusion and future work}\label{sec:future}

We have shown that orbit-generated Gram matrices lie in the right-regular group algebra and
satisfy a cubic trace identity, and that this identity, together with Frobenius
orthogonality, a row-sum bound and a sum-to-$J$ (all-ones) constraint, is sufficient to force
a candidate family of orthogonal matrices to be the right regular representation of a finite
group. This provides a label-free criterion for abstract group recovery from a single orbit
Gram matrix, complementing the orbit-counting perspective of~\cite{mixon2024recovering}, the
group-algebra characterization of group frames in~\cite{mendez2018binary}, and the
identifiability results obtained from third-order data
in~\cite{bandeira2023estimation,edidin2025orbit}, alongside the classical fact that the group
determinant determines the group~\cite{formanek1991groups,hoehnke1992characters}. Whereas
those results show that such data \emph{distinguishes} groups or orbits, the present work
shows that a small set of trace and orthogonality conditions is \emph{sufficient} to verify
the existence of a group structure in a family that is not assumed to be a representation a
priori.

Several directions remain. The recovery strategy used here is one of many possible, and
whether an alternative reaches the right regular representation from a wider range of
starting configurations is open. It
would also be of interest to know whether the conditions are minimal, how the recovery
behaves when the Gram matrix is known only approximately, and how to treat group actions that
are linear but not isometric, in the spirit of the open problems raised
in~\cite{mixon2024recovering}.

\bibliographystyle{alpha}
\bibliography{Reference}

\end{document}